\DeclareMathOperator{\dist}{dist}
\newtheorem{thm}{Theorem}[section]
\newtheorem{que}[thm]{Question}
\newtheorem{defn}[thm]{Definition}
\newtheorem{lem}[thm]{Lemma}
\newtheorem{prop}[thm]{Proposition}
\newtheorem{rem}[thm]{Remark}
\newtheorem{claim}[thm]{Claim}
\newcommand\N{{\mathbb{N}}}
\newcommand\E{{\mathbb{E}}}
\newcommand\R{{\mathbb R}}
\newcommand{\al}{\alpha}
\newcommand{\ep}{\varepsilon}
\newcommand{\ZZ}{\mathbb{Z}}
\newcommand{\Z}{\mathbb{Z}}
\newcommand{\interior}{\mathrm{int}}
\newcommand{\Rep}{\mbox{Rep}}
\title[Hyperbolic sets]{Nonlocally maximal and premaximal hyperbolic sets}
\author{T.~Fisher, T.~Petty, and S.~Tikhomirov}
\address{T.~Fisher, Department of Mathematics, Brigham Young University, Provo, UT 84602}
\email{tfisher@mathematics.byu.edu}
\address{T.~Petty, Department of Mathematics, Brigham Young University, Provo, UT 84602}
\email{taylor.michael.petty@gmail.com}
\address{S.~Tikhomirov, Max Planck Institute for Mathematics in the sciences, Inselstrasse 22, 04103, Leipzig, Germany;
Chebyshev Laboratory, Saint-Petersburg State University, 14th Line 29B, Vasilyevsky Island, St.Petersburg 199178, Russia}
\email{sergey.tikhomirov@gmail.com}
\thanks{The authors would like to thank the Sixth International Conference on Differential and Functional Differential Equations during which much of the paper was prepared. T.F.\ is supported by Simons Foundation grant \# 239708. S. T.\ is partially supported by Chebyshev Laboratory under
Russian Federation Government grant 11.G34.31.0026, JSC ``Gazprom neft'', Saint Petersburg
State University research grant 6.38.223.2014, Russian Foundation of Basic Research
15-01-03797a and German-Russian Interdisciplinary Science Center (G-RISC)}	
\begin{document}

\subjclass[2000]{37D20, 37D05, 37C05}
\keywords{hyperbolic sets, hyperbolic flow, premaximal, locally maximal, isolated}
\commby{}

\begin{abstract} We prove that for any closed manifold of dimension 3 or greater that there is an open set of smooth flows that have a hyperbolic set that is not contained in a locally maximal one.
Additionally, we show that the stabilization of the shadowing closure of a hyperbolic set is an intrinsic property for premaximality. Lastly, we review some results due to Anosov that concern premaximality.

\end{abstract}
\maketitle

\section{Introduction}\label{secIntro}
Since the 1960s the study of hyperbolic sets has been a cornerstone in the field of dynamical systems. These sets are remarkable not only in their complexity, but also in the fact that they persist under perturbations. Additionally, for a point in a hyperbolic set the derivative of the map at this point gives information on the local dynamics for the original nonlinear map.

As a reminder, for a diffeomorphism $f:M\rightarrow M$, a compact invariant set $\Lambda$ is hyperbolic for $f$ if $T_\Lambda M=\mathbb{E}^s\oplus \mathbb{E}^u$ is a $Df$-invariant splitting such that $\mathbb{E}^s$ is uniformly contracted and $\mathbb{E}^u$ is uniformly expanded by $Df$.


Anosov was one of the pioneers in studying hyperbolic sets. Indeed, if the entire manifold is a hyperbolic set for a diffeomorphism, then the diffeomorphism is called {\it Anosov}. This is the best understood class of hyperbolic sets due to its rigid structure.

The next class of hyperbolic sets that are well understood are those that are locally maximal, where $\Lambda$ is {\it locally maximal} for a diffeomorphism if there exists a neighborhood $U$ of $\Lambda$ such that
$$\Lambda=I_f(U)= \cap_{n \in \mathbb{Z}}f^n(U).$$ Hence, such sets are the maximal invariant set within $U$.
A question that was posed in the 1960s, and stated for instance in~\cite{KH}, is the following:

\begin{que}\label{q.main}
If $\Lambda$ is a hyperbolic set and $U$ is a neighborhood of $\Lambda$, then is there a locally maximal hyperbolic set $\tilde{\Lambda}$ such that $\Lambda\subset \tilde{\Lambda}\subset U$?
\end{que}

It was shown by Crovisier~\cite{Cro} that there is a hyperbolic set on the 4-torus that is never included in a locally maximal set. Later, in~\cite{Fis1} it was shown that any compact boundaryless manifold with dimension bigger than 1 has a $C^r$ open set of diffeomorphisms where $1\leq r\leq \infty$ such that each diffeomorphism in the open set contains a hyperbolic set that is not included in a locally maximal one.

Our first result is an extension of the results in~\cite{Fis1} to hyperbolic flows.
As a reminder, for a smooth flow $\phi:M\rightarrow M$, a compact invariant set $\Lambda$ is hyperbolic for $\phi$ if $T_\Lambda M=\mathbb{E}^s\oplus\mathbb{E}^c\oplus \mathbb{E}^u$ is a flow invariant splitting such that $\mathbb{E}^s$ is uniformly contracted by the flow, $\mathbb{E}^c$ is the flow direction, and $\mathbb{E}^u$ is uniformly expanded by the flow. A set $\Lambda$ is locally maximal for the flow $\phi$ if there is an open set $U$ containing $\Lambda$ such that
$$
\Lambda=I_\phi(U)= \cap_{t \in \mathbb{R}}\phi_t(U).
$$

\begin{thm}\label{t.flows}
Let $M$ be a compact, boundaryless $C^r$ manifold for $1\leq r\leq \infty$ with $\dim M \geq 3$ and $\mathcal{X}^k(M)$ be the set of $C^k$ flows on $M$ where $1\leq k\leq r$. Then there exists a $C^k$ open set of flows on $M$ such that each flow contains a hyperbolic set not contained in a locally maximal one.
\end{thm}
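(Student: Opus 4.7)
The plan is to bootstrap from the diffeomorphism result of \cite{Fis1} via a suspension-in-a-solid-torus construction on $M$.

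First, I would extract from \cite{Fis1} the following local form of that theorem: a $C^k$-open family $\mathcal{U}$ of diffeomorphisms $f: D^{n-1} \to D^{n-1}$, where $n = \dim M$, each equal to the identity on a collar of $\partial D^{n-1}$ and each carrying a hyperbolic set $\Lambda_f \subset \interior(D^{n-1})$ that is not contained in any locally maximal hyperbolic set for $f$. Since the construction in \cite{Fis1} builds its example inside a small ball of the ambient manifold, this reduction should be a matter of reading the proof there carefully and recording that the support can be chosen to be a disk.

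Next, I would suspend $f \in \mathcal{U}$ to a flow $\phi^f$ on $D^{n-1} \times S^1$: on $D^{n-1} \times [0,1]$, combine $\partial_t$ with a time-dependent vector field in the $D^{n-1}$ factor whose time-one map realizes $f$, and then identify the endpoints of the interval. Because $f$ is the identity near $\partial D^{n-1}$, the flow $\phi^f$ is a pure rotation along the boundary torus $\partial D^{n-1} \times S^1$. Since $\dim M \geq 3$, there is a smooth embedding $\iota: D^{n-1} \times S^1 \hookrightarrow M$ realized as a tubular neighborhood of any smoothly embedded circle in $M$. I would push $\phi^f$ forward via $\iota$ and extend it to a global $C^k$ vector field $X_f$ on $M$ by bump-function interpolation with any fixed background vector field. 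The suspension $\tilde\Lambda_f := \iota(\Lambda_f \times S^1)$ is then a hyperbolic set for $X_f$ by standard suspension computations.

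The crux is showing that $\tilde\Lambda_f$ is not contained in any locally maximal hyperbolic set for $X_f$. Suppose for contradiction that $\tilde\Lambda_f \subset \Gamma = I_{\phi^{X_f}}(V)$ for some neighborhood $V$ of $\tilde\Lambda_f$, with $\Gamma$ hyperbolic. Take the cross-section $\Sigma := \iota(D^{n-1} \times \{0\})$, which is transverse to $X_f$ inside the solid torus and disjoint from the interpolation collar. The first-return map $P: \Sigma \to \Sigma$ is a $C^k$-diffeomorphism conjugate to $f$, and $\Gamma \cap \Sigma$ is then a locally maximal hyperbolic set for $P$ inside $V \cap \Sigma$ containing $\Lambda_f$, contradicting $f \in \mathcal{U}$. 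Openness of the conclusion follows because $C^k$-nearby vector fields to $X_f$ still admit a transverse cross-section whose return map is $C^k$-close to $f$ and hence remains in $\mathcal{U}$; the same cross-section argument then applies to each such perturbation.

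The main obstacle I anticipate is the cross-section correspondence in the penultimate paragraph: one must verify that the intersection of a locally maximal \emph{flow}-invariant hyperbolic set with $\Sigma$ is locally maximal for the \emph{return map} $P$, and that this correspondence is uniform enough to survive $C^k$ perturbation of $X_f$. A secondary technical point is verifying that the construction in \cite{Fis1} truly localizes to a disk with boundary-trivial support; this should be visible from inspection but may warrant an explicit preliminary lemma.
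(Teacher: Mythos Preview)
Your suspension-plus-cross-section strategy is a genuinely different route from the paper's, and the overall architecture is sound, but the step you flag as ``the main obstacle'' is in fact a real gap as written. You assume that if $\Gamma$ is a locally maximal hyperbolic set for the flow with $\tilde\Lambda_f\subset\Gamma$, then $\Gamma\cap\Sigma$ is locally maximal for the return map $P$. For this you need every orbit of $\Gamma$ to meet $\Sigma$, i.e.\ $\Gamma$ to live in the solid torus. But nothing in your construction forces that: you extend $X_f$ by ``bump-function interpolation with any fixed background vector field'', so $\Gamma$ could well contain pieces outside the torus and $\Gamma\cap\Sigma$ need not be $P$-invariant, let alone locally maximal. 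Using \cite{Fis1} purely as a black box (``contradicting $f\in\mathcal U$'') therefore does not close. Two fixes are available: either design the extension so that no hyperbolic orbit can exist outside the torus (e.g.\ the vector field vanishes identically there, and you check the interpolation collar creates no hyperbolicity), or open the box and use only the \emph{local} content of the \cite{Fis1} argument---local product structure of $\Gamma$ near $\tilde\Lambda_f$ forces the tangency point into $\Gamma$---which needs no global control of $\Gamma$.

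For comparison, the paper takes the second route, but carried out directly at the flow level rather than by reduction. It starts from Hunt's explicit $C^2$ Plykin-attractor flow on a solid torus (itself morally a suspension), performs the Fisher modifications---adding a hyperbolic periodic saddle $p$ on the boundary, a transverse heteroclinic $z\in W^u(p)\cap W^s(q)$, and a quadratic tangency $w$---in the flow category, and then argues with the \emph{flow} local product structure that any locally maximal $\Lambda'\supset\Lambda$ must contain $w$, where the splitting fails to extend. Embedding into higher dimensions is done via normal hyperbolicity rather than a generic background field, which also handles robustness. Your approach, if the gap is repaired, would be shorter and more conceptual, trading the explicit Hunt construction for a clean suspension; the paper's approach is heavier but sidesteps the cross-section correspondence entirely.
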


\begin{rem}
Note that if $\dim M \leq 2$ then every hyperbolic set for a smooth flow is a finite union of hyperbolic closed trajectories and hence it is locally maximal.
\end{rem}

However, there is a good reason for Question~\ref{q.main}, since all previously known examples of hyperbolic sets were included in locally maximal ones. In this paper we examine conditions under which a hyperbolic set, $\Lambda$, is included in a locally maximal hyperbolic set within an arbitrarily small neighborhood of $\Lambda$.

Following the terminology introduced by Anosov in~\cite{Ano2} we define a
hyperbolic set $\Lambda$ for a diffeomorphism to be {\it premaximal} if for any open set $U$ containing $\Lambda$ there is a locally maximal hyperbolic set $\tilde{\Lambda}$ such that $\Lambda\subset \tilde{\Lambda}\subset U$. In~\cite{Ano2} Anosov proves that any zero-dimensional hyperbolic set for a diffeomorphism is premaximal, and in~\cite{Ano1} Anosov proves there is an intrinsic property for premaximal hyperbolic sets for diffeomorphisms. Moreover the following holds.
\begin{thm} \cite{Ano5}\label{t.anosov}
Let $f:M\rightarrow M$ and $f':M'\rightarrow M'$ be diffeomorphisms, $\Lambda$ a hyperbolic set for $f$, $\Lambda'$ a hyperbolic set for $f'$, and $h:\Lambda\rightarrow \Lambda'$ a homeomorphism such that $h\circ f=f'\circ h$. Then there exists neighborhoods $V \subset U$ of $\Lambda$ and $V' \subset U'$ of $\Lambda'$ and continuous injective equivariant maps $h_1: I_f(U) \to M'$ and $h_2: I_{f'}(U') \to M$ such that $h_1|_{\Lambda} = h$ and
\begin{align*}
 h_1(I_f(V)) & \subset I_{f'}(U'), \quad & h_2(I_{f'}(V')) & \subset I_{f}(U) \\
 h_1 \circ f|_{I_f(V)} &= g \circ h_1|_{I_f(V)}, \quad &  f \circ h_2|_{I_{f'}(V')} &= h_2 \circ f'|_{I_{f'}(V')}\\
 h_2 \circ h_1|_{I_f(V)} & = id, \quad & h_1 \circ h_2|_{I_{f'}(V')} & = id.
\end{align*}
\end{thm}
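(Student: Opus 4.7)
The plan is to extend $h$ off of $\Lambda$ by combining expansivity with the Shadowing Lemma for hyperbolic sets. Both $\Lambda$ and $\Lambda'$ are expansive with constants $\ep_0$, $\ep_0'$, and for every sufficiently small $\ep$ there is $\delta(\ep)$ such that every $\delta$-pseudo-orbit in $\Lambda$ (respectively $\Lambda'$) is uniquely $\ep$-shadowed by a true orbit of $f$ (respectively $f'$), though this orbit need not lie in the hyperbolic set. Since $h$ is a homeomorphism between compact sets, both $h$ and $h^{-1}$ are uniformly continuous.

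First I would construct $h_1$. Fix $\ep'<\ep_0'/2$ and choose $U$ so thin that every orbit $(f^n(x))_{n\in\Z}$ in $I_f(U)$ admits points $p_n\in\Lambda$ with $d(f^n(x),p_n)<\eta$ for a prescribed small $\eta$. A short triangle-inequality argument, together with uniform continuity of $h$ and $f$, then shows that $(h(p_n))_{n\in\Z}$ is a $\delta(\ep')$-pseudo-orbit in $\Lambda'$. Define $h_1(x)$ to be the unique $\ep'$-shadow of this pseudo-orbit at time zero. Independence from the choice of $(p_n)$ follows from expansivity of $f'$: two admissible selections produce pseudo-orbits lying uniformly close, so their $\ep'$-shadows must coincide.

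Next I would verify the listed properties. Equivariance $h_1\circ f = f'\circ h_1$ and continuity of $h_1$ follow from the shift-invariance and continuous dependence in the Shadowing Lemma. For $x\in\Lambda$ the natural choice $p_n=f^n(x)$ produces a genuine $f'$-orbit $(h(p_n))$ that shadows itself, giving $h_1|_\Lambda=h$. Shrinking $V\subset U$ so that the approximation radius becomes $\eta_1\ll\eta$ forces the associated pseudo-orbit, and hence its shadow, to lie close to $\Lambda'$; choosing $U'$ to be a sufficiently wide neighborhood of $\Lambda'$ and applying equivariance yields $h_1(I_f(V))\subset I_{f'}(U')$. Injectivity is a direct consequence of expansivity of $f$ and uniform continuity of $h^{-1}$: if $h_1(x)=h_1(x')$, then $(h(p_n))$ and $(h(p_n'))$ are both $\ep'$-shadowed by the same orbit, hence close, hence $(p_n)$ and $(p_n')$ and thus $(f^n(x))$ and $(f^n(x'))$ stay within $\ep_0$ of each other for all $n$. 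Exchanging the roles of $(f,\Lambda,h)$ and $(f',\Lambda',h^{-1})$ produces $h_2:I_{f'}(U')\to M$ with the symmetric properties.

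Finally, I would verify $h_2\circ h_1|_{I_f(V)}=\mathrm{id}$, with the identity $h_1\circ h_2|_{I_{f'}(V')}=\mathrm{id}$ handled symmetrically. Given $x\in I_f(V)$, let $(h(p_n))$ be the pseudo-orbit used to define $h_1(x)$. In the construction of $h_2(h_1(x))$ one may choose the approximating points in $\Lambda'$ to be precisely $h(p_n)$, whereupon the resulting pseudo-orbit in $\Lambda$ is $(p_n)$, which by construction is already $\eta$-shadowed by the $f$-orbit of $x$. Uniqueness of the shadow forces $h_2(h_1(x))=x$. The main technical obstacle is organizing the nested constants $\eta_1\ll\eta\ll\ep'\ll\ep_0'$ together with their mirror-image counterparts for $\Lambda$ so that every invocation of expansivity and shadowing is simultaneously valid; once these constants are fixed, everything else is a routine bookkeeping exercise based on the Shadowing Lemma.
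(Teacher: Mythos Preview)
Your proposal is correct and follows essentially the same approach the paper sketches (following Anosov~\cite{Ano5}): encode orbits in $I_f(U)$ as pseudo-orbits in $\Lambda$, push them through $h$ to pseudo-orbits in $\Lambda'$, and define $h_1$ via the unique shadow, with expansivity supplying well-definedness, injectivity, and the identity $h_2\circ h_1=\mathrm{id}$. The only cosmetic difference is that the paper packages the same steps through the spaces $P(\delta)$, $P'(\delta')$ of pseudo-orbits and the shadowing maps $T$, $T'$, writing $h_1 = T'\circ h\circ S$ where $S$ selects an approximating pseudo-orbit; your pointwise description unwinds exactly this composition.
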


The above theorem shows that $f|_{\Lambda}$ defines the set of trajectories that lie in a sufficiently small neighborhood of $\Lambda$. However, in~\cite{Ano1} the specific intrinsic property for premaximality is not stated.

We extend result of \cite{Ano1} to the case of flows and prove the premaximality is an intrinsic property for hyperbolic sets for flows.

Let $X$ and $X'$ be vector fields on smooth compact Riemannian manifolds $M$ and $M'$ respectively. Denote by $\varphi$ and $\varphi'$ flows generated by them.

Let $\Lambda$ and $\Lambda'$ be hyperbolic sets for $X$ and $X'$ respectively. We say that $\Lambda$ and $\Lambda'$ are \textit{conjugated} if there exists a homeomorphism $h: \Lambda \to \Lambda'$ and a map $\alpha: M \times \R \to \R$ such that
\begin{equation}\label{eqal}
h \circ \varphi_t(x) = \varphi'(\alpha(x, t), h(x)), \quad x \in \Lambda, t \in \R.
\end{equation}
and
$
\alpha(x, \cdot)
$
a reparameterization for each $x\in\Lambda$.
In that case there exists function $\beta : M' \times \R \to \R$, satisfying the following
$$
\beta(h(x), \alpha(x, t)) = t, \quad x \in \Lambda, t \in \R
$$
$$
\alpha(h^{-1}(x'), \beta(x', t')) = t', \quad x' \in \Lambda', t' \in \R.
$$ 
\begin{thm}\label{thmPremaxFlow}
Let $\Lambda$ and $\Lambda'$ be hyperbolic sets for vector fields $X$ and $X'$ respectively. Assume that $\Lambda$ and $\Lambda'$ are conjugated. Then $\Lambda$ is premaximal if and only if $\Lambda'$ is premaximal.
\end{thm}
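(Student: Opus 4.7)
The plan is to transport a locally maximal hyperbolic set across the conjugacy by extending the reparametrized conjugacy $(h,\alpha)$ to a continuous, locally invertible equivariance on full neighborhoods of $\Lambda$ and $\Lambda'$. This is the flow analogue of the strategy Anosov uses in the diffeomorphism setting, for which Theorem~\ref{t.anosov} is the key tool. The analogous tool for flows is a Reparametrization Lemma of the following form, which I would state and prove separately: there exist neighborhoods $V\subset U$ of $\Lambda$ and $V'\subset U'$ of $\Lambda'$, continuous reparametrizations extending $\alpha$ and $\beta$, and continuous injective maps $h_1:I_{\varphi}(U)\to M'$ and $h_2:I_{\varphi'}(U')\to M$, each equivariant up to its reparametrization, such that $h_1|_{\Lambda}=h$, $h_2|_{\Lambda'}=h^{-1}$, $h_1(I_{\varphi}(V))\subset I_{\varphi'}(U')$, $h_2(I_{\varphi'}(V'))\subset I_{\varphi}(U)$, and $h_1\circ h_2=\mathrm{id}$, $h_2\circ h_1=\mathrm{id}$ on their respective domains. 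A natural approach to this lemma is to trivialize the flows along local cross-sections through $\Lambda$, apply the diffeomorphism result to the induced Poincar\'e maps, and reconstruct the reparametrization by recording crossing times; the main difficulty is producing the extensions continuously and mutually inversely while keeping the reparametrization well-defined and compatible.

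Granting the Reparametrization Lemma, the theorem follows by a two-sided squeezing of neighborhoods; by symmetry it suffices to show that premaximality of $\Lambda$ implies premaximality of $\Lambda'$. Let $U'$ be an arbitrary neighborhood of $\Lambda'$. Apply the lemma and, shrinking if necessary, arrange the associated neighborhood $\mathcal{U}'$ to sit inside $U'$ and to be small enough that the hyperbolic splitting on $\Lambda'$ extends to all of $\mathcal{U}'$; write $\mathcal{U},\mathcal{V}$ and $\mathcal{V}'$ for the corresponding neighborhoods of $\Lambda$ and $\Lambda'$. By continuity of $h_1$ on the compact set $\Lambda$ and the fact that $h_1(\Lambda)=\Lambda'\subset\mathcal{V}'$, pick an open $W\subset\mathcal{V}$ containing $\Lambda$ with $h_1(W\cap I_{\varphi}(\mathcal{U}))\subset\mathcal{V}'$. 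Premaximality of $\Lambda$ applied to $W$ yields a locally maximal hyperbolic set $\tilde{\Lambda}$ with $\Lambda\subset\tilde{\Lambda}\subset W$; after possibly shrinking, fix an open isolating neighborhood $W_1\subset W$ of $\tilde{\Lambda}$ so that $\tilde{\Lambda}=I_{\varphi}(W_1)$.

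Define $\tilde{\Lambda}':=h_1(\tilde{\Lambda})$. Since $h_1$ is equivariant up to reparametrization, $\tilde{\Lambda}'$ is $\varphi'$-invariant; it is compact as the continuous image of a compact set, and it lies in $\mathcal{V}'\subset U'$ by the choice of $W$. As $\tilde{\Lambda}'$ is an invariant compact set inside $\mathcal{U}'$, where the hyperbolic splitting of $\Lambda'$ extends, it is hyperbolic for $\varphi'$. For local maximality, use continuity of $h_2$ and the equality $h_2(\tilde{\Lambda}')=\tilde{\Lambda}\subset W_1$ to choose an open $W'\subset\mathcal{V}'$ with $\tilde{\Lambda}'\subset W'$ and $h_2(W'\cap I_{\varphi'}(\mathcal{V}'))\subset W_1$. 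If $x'\in I_{\varphi'}(W')$, then its $\varphi'$-orbit lies in $W'\cap I_{\varphi'}(\mathcal{V}')$, so its $h_2$-image lies in $W_1$; equivariance identifies this image with the $\varphi$-orbit of $h_2(x')$, whence $h_2(x')\in I_{\varphi}(W_1)=\tilde{\Lambda}$ and thus $x'=h_1(h_2(x'))\in\tilde{\Lambda}'$. The reverse containment is immediate from invariance, so $\tilde{\Lambda}'=I_{\varphi'}(W')$ is the desired locally maximal hyperbolic set between $\Lambda'$ and $U'$.

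The main obstacle I expect is the Reparametrization Lemma itself: the pigeonholing of neighborhoods above is essentially formal once one has continuous, mutually inverse, equivariant-up-to-reparametrization extensions of $h$, but producing these extensions for flows and controlling the reparametrizations $\alpha,\beta$ continuously and invertibly is where the genuine technical work lies.
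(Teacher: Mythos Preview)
Your overall strategy---extend the conjugacy to an equivariant correspondence on neighborhoods and transport a locally maximal set across it---is exactly the paper's. The divergence is in the form of the extension lemma you postulate.

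You ask for continuous injective maps $h_1,h_2$ that are exact mutual inverses and equivariant up to reparametrization. The paper's flow analogue of Theorem~\ref{t.anosov} (their Theorem~\ref{thmConj}) delivers strictly less: the maps $h_1,h'_1$ are built as $T'\circ h\circ S$ and $T\circ h^{-1}\circ S'$ via shadowing, they are explicitly \emph{not necessarily continuous}, and the compositions satisfy only $h'_1\circ h_1(x)=\varphi_\tau(x)$ for some $|\tau|<\tau_0$ rather than the identity. The authors remark that they tried to obtain continuous invertible maps and failed, the obstruction being nonuniqueness of shadowing for flows. So the Reparametrization Lemma in the strength you state it is precisely the point where the paper's authors could not push through, and your suggested route via global cross-sections and Poincar\'e maps would have to confront the absence of a global section and the gluing of local return maps---which is not addressed in your sketch.

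Consequently the downstream argument also differs. Because $h_1$ is not continuous and $h'_1\circ h_1$ is only a time-shift, the paper does not set $\tilde\Lambda'=h_1(\tilde\Lambda)$ but rather its orbit saturation $\Sigma'=O(h_1(\Sigma))$, and the proof that $\Sigma'$ is isolated uses the ``continuity up to time-shift'' properties \eqref{eqthm21}--\eqref{eqthm22} and the almost-inverse relations \eqref{eqthm5} in place of genuine continuity and invertibility. Your formal squeezing argument is morally correct but rests on a lemma stronger than what is known; the fix is to weaken the lemma to what shadowing actually gives and redo the isolation argument with orbit saturation and $\tau$-slack, as the paper does.
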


Below we provide an equivalent condition to premaximality for diffeomorphisms.
Before stating the result we define some important terms involving shadowing.
Let $a > 0$ be an expansivity constant for some neighborhood of $\Lambda$ and let $\delta_0 > 0$ be such that
any $\delta_0$-pseudotrajectory in $\Lambda$ can be $a/2$-shadowed by an exact trajectory (definitions are given in the next section). Note that due to expansivity the shadowing trajectory is unique.

For $\Lambda$ a hyperbolic set for a diffeomorphism and $\delta \in (0, \delta_0)$ the {\it shadowing closure} (or {\it $\delta$-shadowing closure}) of $\Lambda$ is
$$\mathrm{sh}(\Lambda, \delta)=\overline{\{y\in M: y \textrm{ shadows a }\delta\textrm{-pseudo orbit in }\Lambda\}}.$$
For a fixed $\delta>0$ we can construct a sequence of shadowing closures $\Lambda_0, \Lambda_1, ..., $ where $\Lambda_0=\Lambda$ and $\Lambda_j=\mathrm{sh}(\Lambda_{j-1}, \delta)$ for $j\in\mathbb{N}$. We say a shadowing sequence {\it stabilizes} if $\Lambda_j=\Lambda_{j+1}$ for all $j\geq N$ where $N\in\mathbb{N}$.

\begin{thm}\label{t.shadowing}
Let $f\in\mathrm{Diff}(M)$ and $\Lambda$ be a hyperbolic set for $f$. The following statements are equivalent
\begin{itemize}
\item[(1)] $\Lambda$ is premaximal;
\item[(2)] for any neighborhood $U$ of $\Lambda$ the shadowing closure stabilizes inside $U$ for some $\delta>0$.
\end{itemize}
\end{thm}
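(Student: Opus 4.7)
The plan is to prove both implications, with the heavier work in $(1)\Rightarrow(2)$. For $(2)\Rightarrow(1)$, fix a neighborhood $U$ of $\Lambda$; without loss of generality we may shrink $U$ so that its closure lies inside a neighborhood on which the hyperbolic splitting extends to any compact invariant subset (a locally maximal extension found in a smaller neighborhood $U'\subset U$ is a fortiori in $U$). Assume the sequence stabilizes at $\tilde\Lambda:=\Lambda_N\subset U$ for some $\delta\in(0,\delta_0)$. A short induction shows each $\Lambda_j$ is $f$-invariant (if $y$ $a/2$-shadows a $\delta$-pseudo-orbit $\{x_n\}\subset\Lambda_{j-1}$, then $f^{\pm 1}(y)$ shadows its shift), so the closed set $\tilde\Lambda$ is compact, $f$-invariant, hyperbolic, and contains $\Lambda$. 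The identity $\mathrm{sh}(\tilde\Lambda,\delta)=\tilde\Lambda$ then yields a local product structure: for $x,y\in\tilde\Lambda$ sufficiently close, concatenating the backward orbit of $x$ with the forward orbit of $y$ is a $\delta$-pseudo-orbit in $\tilde\Lambda$ whose unique $a/2$-shadow lies in $W^u_{\mathrm{loc}}(x)\cap W^s_{\mathrm{loc}}(y)\cap\tilde\Lambda$. By the standard characterization, a compact hyperbolic invariant set with local product structure is locally maximal, so $\tilde\Lambda$ witnesses premaximality for this $U$.

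For $(1)\Rightarrow(2)$, given $U$, use premaximality to obtain a locally maximal hyperbolic set $\tilde\Lambda$ with $\Lambda\subset\tilde\Lambda\subset U$, with isolating neighborhood $W\subset U$ satisfying $I_f(W)=\tilde\Lambda$. Pick $\delta\in(0,\delta_0)$ small enough that every $a/2$-shadow of a $\delta$-pseudo-orbit in $\tilde\Lambda$ remains inside $W$, hence inside $\tilde\Lambda$. A simple induction gives $\Lambda_j\subset\tilde\Lambda$ for every $j$; since the sequence is monotone increasing, the set $\Lambda_\infty:=\overline{\bigcup_j\Lambda_j}\subset\tilde\Lambda$ is compact. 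To upgrade this to finite stabilization, fix $\varepsilon>0$ with $(L+1)\varepsilon<\delta$ (where $L$ is a Lipschitz constant of $f$ on a neighborhood of $\tilde\Lambda$) and $\varepsilon<a/2$. By compactness of $\Lambda_\infty$ and density of $\bigcup_j\Lambda_j$, we may fix $N$ so that $\Lambda_N$ is $\varepsilon$-dense in $\Lambda_\infty$. For any $z\in\Lambda_\infty$ and any $n\in\mathbb{Z}$, choose $y_n\in\Lambda_N$ with $d(y_n,f^n(z))<\varepsilon$; then $d(f(y_n),y_{n+1})\le L\varepsilon+\varepsilon<\delta$, so $\{y_n\}$ is a $\delta$-pseudo-orbit in $\Lambda_N$, and $z$ is an $\varepsilon$-shadow of it, hence, by expansivity, the unique $a/2$-shadow. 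Thus $z\in\Lambda_{N+1}$, giving $\Lambda_\infty\subset\Lambda_{N+1}\subset\Lambda_\infty$ and therefore $\Lambda_{N+1}=\Lambda_\infty=\Lambda_{N+k}$ for all $k\ge 1$.

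The main obstacle is precisely this last bootstrap. Monotonicity of the $\Lambda_j$ combined with containment in the compact set $\tilde\Lambda$ delivers only asymptotic filling up by $\Lambda_\infty$; termination at a finite stage is not automatic and must be engineered by coupling the compactness of $\Lambda_\infty$ with the shadowing/expansivity mechanism, as in the $\varepsilon$-net construction above. The equivalence between stabilization of $\mathrm{sh}(\cdot,\delta)$ and local product structure/local maximality (used in both directions) is the conceptual content of the theorem; the finite-$N$ refinement is the technical heart.
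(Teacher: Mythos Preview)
Your proof is correct in both directions. The $(2)\Rightarrow(1)$ direction matches the paper's approach: the stabilized set is shown to be locally maximal via local product structure (the paper packages this as a separate proposition, citing Anosov's equivalence between local maximality and ``internal shadowing'').

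For $(1)\Rightarrow(2)$, both you and the paper start identically: trap all the $\Lambda_j$ inside a locally maximal $\tilde\Lambda\subset U$ by choosing $\delta$ small enough. The difference is in how finite stabilization is forced. The paper argues by contradiction via Hausdorff distances: setting $\nu_{j+1}=d_H(\Lambda_{j+1},\Lambda_j)$, it proves a claim that whenever two consecutive steps are strict inclusions, at least one of $\nu_j,\nu_{j+1}$ is bounded below by a fixed $\gamma$ (depending on a uniform continuity modulus of $f,f^{-1}$). It then appeals to finite-dimensionality of $M$ to conclude that repeated $\gamma$-jumps must eventually push $\Lambda_N$ outside $B_\eta(\Lambda)$, contradicting $\Lambda_N\subset\tilde\Lambda$. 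Your route is different and, arguably, cleaner: you pass to the limit $\Lambda_\infty=\overline{\bigcup_j\Lambda_j}$, use compactness to find $N$ with $\Lambda_N$ $\varepsilon$-dense in $\Lambda_\infty$, and then manufacture for each $z\in\Lambda_\infty$ a $\delta$-pseudo-orbit in $\Lambda_N$ that $z$ itself shadows, yielding $\Lambda_\infty\subset\Lambda_{N+1}$. This avoids the somewhat informal dimension/volume step in the paper and makes the role of compactness explicit. The paper's approach, on the other hand, gives quantitative information (a definite $\gamma$-jump at each non-stabilizing step) that your argument does not extract.
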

Note that due to Theorem \ref{t.anosov} the second property in Theorem \ref{t.shadowing} is intrinsic.

The paper proceeds as follows. In Section~\ref{s.background} we review relevant background on hyperbolicity and flows. In Section~\ref{s.nonlocally} we prove Theorem~\ref{t.flows}. In Section~\ref{s.premaximal} we review the results of Anosov in~\cite{Ano1, Ano2, Ano3, Ano4, Ano5} and prove Theorems, \ref{thmPremaxFlow}, \ref{t.shadowing}.

\section{Background}\label{s.background}

\subsection{Hyperbolic sets for flows}
We first review properties
of hyperbolic sets for flows.


\begin{defn}

Let $X$ be a metric space and $\phi$ a continuous flow on $X$. Then for $x\in X$, we define the \textit{stable set} $$W^{s}(x):=\{y\in X: \lim_{t\to\infty} d(\phi_t(x),\phi_t(y))=0\}.$$ Further, for $\varepsilon>0$ the \textit{$\varepsilon$-stable set} is
$$W^{s}_{\varepsilon}(x):=\{y\in W^{s}(x): d(\phi_t(x),\phi_t(y))\leq\varepsilon \text{ for all } t\geq 0\}.$$ Note that the unstable sets $W^{u}(x)$ and $W^{u}_\varepsilon(x)$ are defined identically under the flow $\phi_{-t}$. Furthermore, we define the \textit{center-stable set} $$ W^{cs}(x):=\phi_t(W^s(x))|_{t\in\mathbb{R}}=\bigcup_{\substack{y\in \phi_t(x) \\ t\in\mathbb{R}}} W^s(y).$$ The center-unstable set of $x$ is defined to be the center-stable set of $x$ under $\phi_{-t}.$ We will also use the notation $W^s_{\text{loc}}$ to mean $W^s_\varepsilon$ for sufficiently small $\varepsilon$ (dependening on the context) and $W^u_{\text{loc}}$ similarly to mean  $W^u_\varepsilon$ for small $\varepsilon$.

\end{defn}

In the case where $X$ is a manifold and $\phi_t$ a $C^r$ flow, the stable set of a point of a hyperbolic set is a $C^r$ submanifold of $X$. Note that the stable manifold is an embedded copy of $\mathbb{R}^k$ where $k=$ dim $\mathbb{E}^s(x)$. The same applies for the unstable sets, center-stable sets, and center-unstable sets. Also note that the stable and unstable manifolds vary continuously both on each other and on the relevant point.

\begin{defn}\label{localproductstructure}
For a metric space $X$ and a flow $\phi,$ a set $\Gamma\subset X$ is said to have a \textit{local product structure} if for all $\varepsilon>0$ there exists a $\delta>0$ such that given $x,y\in\Gamma$ with $d(x,y)<\delta$ we have, for some real $|t|<\varepsilon,$ a unique point $\mathcal{S}(x,y):=b\in W_{\varepsilon}^u(\phi_t(x))\cap W_{\varepsilon}^s(y)$.
\end{defn}



The following lemma is also critical to the paper. Note that this lemma is almost always stated and proved for maps, but is in fact true for flows as well (see \cite{BG} and \cite{Kifer}).

\begin{lem}
A hyperbolic set $\Gamma$ has a local product structure if and only if it is locally maximal.
\end{lem}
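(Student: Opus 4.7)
The plan is to prove the two directions separately, using in both cases the local stable and unstable manifolds $W^{s}_{\varepsilon}(x), W^{u}_{\varepsilon}(x)$ that exist at each $x\in\Gamma$ by hyperbolicity, together with their continuous dependence on the basepoint.

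For \emph{locally maximal $\Rightarrow$ local product structure}, let $U$ be an isolating neighborhood of $\Gamma$. I would fix $\varepsilon>0$ so small that every $\varepsilon$-local stable/unstable manifold of a point of $\Gamma$ lies in $U$ and, moreover, the $\varepsilon$-neighborhood of $\Gamma$ lies in $U$. Given $x,y\in\Gamma$ with $d(x,y)$ sufficiently small, the transversality of the splitting $\mathbb{E}^{s}\oplus\mathbb{E}^{c}\oplus\mathbb{E}^{u}$ combined with the inverse function theorem produces a unique small time $|t|<\varepsilon$ and a unique point $b\in W^{u}_{\varepsilon}(\phi_{t}(x))\cap W^{s}_{\varepsilon}(y)$; the extra time coordinate absorbs the one-dimensional flow direction, making the intersection isolated. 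Being in $W^{s}_{\varepsilon}(y)$ forces the forward orbit of $b$ to stay within $\varepsilon$ of the forward orbit of $y\in\Gamma$, and being in $W^{u}_{\varepsilon}(\phi_{t}(x))$ forces its backward orbit to stay within $\varepsilon$ of the backward orbit of $\phi_{t}(x)\in\Gamma$. Hence the entire orbit of $b$ lies in $U$, so by local maximality $b\in\Gamma$, giving the bracket $\mathcal{S}(x,y)=b$.

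For the converse, assume $\Gamma$ has local product structure with bracket $\mathcal{S}$. I would choose a neighborhood $U$ of $\Gamma$ small enough that an expansivity constant $a$ (modulo reparametrization) holds on $U$, every point of $U$ lies within the product-structure distance $\delta$ of a point of $\Gamma$, and $\mathcal{S}$ can be applied and iterated inside $U$ without leaving it. Suppose $z\in\bigcap_{t\in\R}\phi_{t}(U)$. For each integer $n$, pick $x_{n}\in\Gamma$ close to $\phi_{n}(z)$; the sequence $(x_{n})$ is a flow pseudo-orbit along $\Gamma$. Iterated bracketing of the $x_{n}$ (forward and backward from $x_{0}$) produces a point $x\in\Gamma$ whose full orbit shadows $z$ up to a small reparametrization, and expansivity then forces $z$ to lie on the orbit of $x$. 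Since $\Gamma$ is flow-invariant, $z\in\Gamma$, establishing $\Gamma=\bigcap_{t}\phi_{t}(U)$.

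The main obstacle I expect is the flow-specific bookkeeping in the converse direction: each application of $\mathcal{S}$ introduces a small time shift in the center direction, and these shifts accumulate upon iteration, so one must show they remain controlled and do not drift the constructed orbit out of $U$. A cleaner workaround is to pass to a family of smooth Poincaré sections transverse to the flow, reduce to the classical discrete-time version of the lemma (as cited in \cite{BG, Kifer}), and pull back; but one still has to verify that local product structure and local maximality for $\Gamma$ descend correctly to the return map on the trace of $\Gamma$ on the sections, which is itself a small but not entirely trivial verification.
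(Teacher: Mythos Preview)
The paper does not actually prove this lemma: it states it and refers the reader to \cite{BG} and \cite{Kifer} for the flow case. Your outline is essentially the standard argument one finds in those sources---shadowing/bracketing plus expansivity for one direction, and invariance of the orbit of the bracket point for the other---so there is nothing to compare against in the paper itself. Your identification of the accumulating time-shift issue and the cross-section workaround is exactly the point where the flow proof requires extra care beyond the diffeomorphism case, and the references cited handle it along the lines you indicate.
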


In the case that $\Gamma$ is locally maximal and hyperbolic, then $x,y\in\Gamma$ implies $\mathcal{S}(x,y)\subset\Gamma$.

We also need the notion of the shadowing property. For $\ep > 0$ a map $g: \R \to M$ is an $\ep$-pseudotrajectory if the following holds
$$
d(g(t+\tau), \phi_{\tau}(g(t))) < \ep, \quad t \in \R, |\tau| < 1.
$$
We say that $\ep$-pseudotrajectory $g$ is $\delta$-shadowed by a point $x_0$ if there exists an increasing homeomorphisms $\al: \R \to \R$ satisfying
$$
d(g(t), \phi_{\al(t)}(x_0)) < \delta,
$$
$$
\left|\frac{\al(t_1)- \al(t_2)}{t_1-t_2} - 1\right| < \delta, \quad t_1 \ne t_2.
$$
We will use the following standard result from hyperbolic dynamics.

\begin{defn}
We say that vector field $X$ has an expansivity property in a set $W$ there exists constants $a, \tau_0 > 0$ such that if $x_1, x_2 \in I_X(W)$ and there exists a reparametrisation $\alpha$ such that the following inequalities hold
$$
\dist(\varphi(\alpha(t), x_1), \varphi(t, x_2)) < a, \quad t \in \R,
$$
then $x_2 = \varphi(x_1)$, where $\tau \in (-\tau_0, \tau_0)$.
\end{defn}

\begin{thm}
Let $\Lambda$ be a hyperbolic set for a vector field $X$ and a corresponding flow$\phi$.
\begin{itemize}
  \item There exists neighborhood $W$ of $\Lambda$ such that $X$ has an expansivity property on $W$.
  \item Then there exists a neighborhood $U(\Lambda) \subset W$ such that for any $\delta > 0$ there exists $\ep > 0$ such that any $\ep$-pseudotrajectory $g$ can be $\delta$-shadowed by some point $x_0$.
\end{itemize}
\end{thm}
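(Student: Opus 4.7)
The plan is to handle the two items in turn, using standard hyperbolicity tools: a continuous extension of the splitting $E^s \oplus E^c \oplus E^u$ to a neighborhood, uniform-size local stable, unstable and center-(un)stable manifolds through every point of $I_X(W)$ furnished by the stable manifold theorem for hyperbolic sets of flows, and a uniform tubular neighborhood of orbits. Throughout I would shrink $W$ once and for all so that the extended splitting is nearly $D\varphi_t$-invariant for $|t|\le 1$ and all local invariant manifolds have a uniform radius.

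\textbf{Expansivity.} Suppose $x_1, x_2 \in I_X(W)$ and a reparametrization $\alpha$ satisfy $\dist(\varphi(\alpha(t), x_1), \varphi(t, x_2)) < a$ for all $t\in\R$, with $a$ chosen small relative to the tubular radius. Using the flow-box near $x_2$, pick a small $\tau$ so that $\varphi(\tau,x_1)$ lies on the local center-stable manifold of $x_2$; the closeness condition for $t\to+\infty$ then forces $\varphi(\tau,x_1)\in W^s_{\text{loc}}(\varphi(s,x_2))$ for some small $s$, because the center factor cannot grow while the splitting direction forbids drift in $E^u$. Running the symmetric argument for $t\to-\infty$ gives $\varphi(\tau,x_1)\in W^u_{\text{loc}}(\varphi(s',x_2))$. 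Since for nearby points $W^s_{\text{loc}}\cap W^u_{\text{loc}}$ is at most a single orbit point, one obtains $\varphi(\tau, x_1)=\varphi(\tilde s, x_2)$ with $|\tilde s|$ bounded by a modulus $\tau_0$ depending only on $W$, which is the required conclusion.

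\textbf{Shadowing.} Given an $\varepsilon$-pseudotrajectory $g$, I would discretize by setting $y_n=g(n)$, so that $d(\varphi_1(y_n), y_{n+1})<C\varepsilon$ for a fixed constant $C$. At each $y_n$ place a smooth cross section $\Sigma_n$ transverse to $X$ and tangent to the extended $E^s\oplus E^u$, with uniform size. The induced Poincaré-type maps $P_n\colon\Sigma_n\to\Sigma_{n+1}$ are well defined near $y_n$ and uniformly hyperbolic. Finding a true orbit through the sections becomes a discrete shadowing problem for the non-autonomous family $\{P_n\}$, which I would solve by the standard contraction/graph-transform argument on the Banach space of bounded sequences in the $E^s\oplus E^u$-directions along $(y_n)$. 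This yields a sequence $(z_n)$ with $z_{n+1}=P_n(z_n)$ and $d(z_n,y_n)<\delta/2$; take $x_0=z_0$. The reparametrization $\alpha$ is obtained from the return times $s_n$ (the flow time from $z_0$ to $z_n$) by linear interpolation $\alpha(n)=s_n$, and the estimates $|s_{n+1}-s_n-1|\ll\delta$ coming from the uniform transversality of the $\Sigma_n$'s and the closeness $d(z_n,y_n)<\delta/2$ deliver both $d(g(t),\varphi(\alpha(t),x_0))<\delta$ and $|\alpha'(t)-1|<\delta$.

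\textbf{Main obstacle.} The cleanest technical hurdle is the simultaneous control of position and time: for diffeomorphisms the shadowing lemma is clean because time is fixed by iteration, but here the shadowing orbit visits the sections $\Sigma_n$ at times drifting away from the integers, so one must track this drift and not merely the transverse position. Setting up the family of sections so that it depends continuously and uniformly transversally on the pseudo-orbit, and showing the graph-transform on section sequences contracts in a norm controlling both the transverse distance and the accumulated time error, is where the bulk of the work sits. Once this is arranged, both the $\delta$-closeness and the Lipschitz bound on $\alpha$ come from a single fixed-point construction by choosing $\varepsilon$ small relative to $\delta$ and the minimum speed of $X$ on $W$.
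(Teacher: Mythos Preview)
The paper does not actually prove this theorem. It is introduced with the sentence ``We will use the following standard result from hyperbolic dynamics'' and then stated without proof; the bibliography points to Pilyugin's monograph \cite{Pilyugin} and related sources where such results are established. So there is no ``paper's own proof'' to compare your proposal against.

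That said, your sketch follows the standard route one finds in those references. For expansivity, arguing via the uniform local (center-)stable and unstable manifolds and using that $W^s_{\text{loc}}\cap W^u_{\text{loc}}$ is a single short orbit arc is exactly how it is usually done; you should be a bit more careful with the bookkeeping of the reparametrization $\alpha$ (the $\tau$ you pick has to be tracked along the whole orbit, not just at $t=0$), but the idea is right. For shadowing, reducing to a discrete non-autonomous hyperbolic sequence of Poincar\'e maps between uniform transversals and then interpolating the return times to build $\alpha$ is precisely the classical construction (cf.\ Bowen, and Pilyugin~\cite{Pilyugin}). Your identification of the main obstacle---controlling the accumulated time drift simultaneously with the transverse position---is on target and is indeed where most of the care goes in the flow case.

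In short: your proposal is a correct outline of the standard proof, and since the paper simply quotes the result without argument, there is nothing further to compare.
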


\begin{defn}
For $(\phi, X)$ a dynamical system, a subset $A$ of $X$ is called an \textit{attractor} if it satisfies the following three conditions.

\begin{enumerate}
\item[(i)] $A$ is forward-invariant under $\phi$; i.e., $x\in A$ implies $\phi_t(x)\in A$ for all $t>0$.
\item[(ii)] There exists a neighborhood of $A$, called the \textit{basin of attraction} of $A$ and denoted $\mathcal{B}(A)$, which consists of all points that tend towards $A$ under $\phi_t$ as $t\to\infty$. In other words, $\mathcal{B}(A)=\{x:$ for any open neighborhood $N$ of $A,$ $\exists\ T>0 \ni \phi_t(x)\in N\ \forall t>T\}$.
\item[(iii)] No proper subset of $A$ satisfies conditions (i) and (ii).
\end{enumerate}

\end{defn}

When an attractor $\Lambda$ is (uniformly) hyperbolic, it will exhibit additional properties:

\begin{itemize}\label{uhaproperties}
\item Periodic points are dense in $\Lambda$.
\item For $x\in\Lambda, W^{cu}(x)\subset\Lambda$.
\item For $x$ a periodic point in $\Lambda,$ $\displaystyle\bigcup_{y\in\mathcal{O}(x)\\} W^{cs}(y)$ is dense in $\mathcal{B}(\Lambda).$
\end{itemize}

We will need the following technical result, known in the literature as the Inclination Lemma, or $\lambda$-lemma. The statement can be found in \cite{AP}. Note that the statement for hyperbolic periodic points would be similar.

\begin{lem}[Inclination Lemma]\label{inclinationlemma}
Let $p\in M$ be a hyperbolic fixed point for a $C^r$ flow $\phi,$ for $r\geq 1,$ with local stable and unstable manifolds $W^s_{\text{loc}}(p)$ and $W^u_{\text{loc}}(p),$ respectively. Fix an embedded disk $B$ in $W^u_{\text{loc}}(p)$ which is a neighborhood of $p$ in $W^u_{\text{loc}}(p),$ and fix a neighborhood $V$ of this disk in $M.$ Let $D$ be a transverse disk to $W^s_{\text{loc}}(p)$ at a point $z$ such that $D$ and $B$ have the same dimension. Write $D_t$ for the connected component of $\phi_t(D)\cap V$ which contains $\phi_t(z),$ for $t\geq 0.$
Then, given $\varepsilon>0$ there exists $T>0$ such that for all $t>T$ the disk $D_t$ is $\varepsilon$-close to $B$ in the $C^r$-topology.
\end{lem}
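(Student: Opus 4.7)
The plan is to work in an adapted local chart around the fixed point $p$. Since the local stable and unstable manifolds of a hyperbolic fixed point of a $C^r$ flow are $C^r$ submanifolds that meet transversely at $p$, we may choose $C^r$-coordinates $(x^s, x^u) \in \mathbb{E}^s \oplus \mathbb{E}^u$ on a small neighborhood $V_0 \supset V$ of $p$ which simultaneously rectify them, so that $W^s_{\text{loc}}(p) \cap V_0 = \mathbb{E}^s \times \{0\}$ and $W^u_{\text{loc}}(p) \cap V_0 = \{0\} \times \mathbb{E}^u$, and so that $B = \{0\} \times B'$ for some disk $B' \subset \mathbb{E}^u$. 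Fix $0 < \lambda_s < 1 < \lambda_u$ such that the time-one map $\phi_1$ contracts $\mathbb{E}^s$-directions by at most $\lambda_s$ and expands $\mathbb{E}^u$-directions by at least $\lambda_u$, shrinking $V_0$ so that the nonlinear part of $\phi_1$ in these coordinates has arbitrarily small $C^r$-norm.

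The main tool is a graph-transform argument applied to the time-one map $\phi_1$. Because $D$ is transverse to $W^s_{\text{loc}}(p)$ at $z$ and $\dim D = \dim \mathbb{E}^u$, the implicit function theorem represents a neighborhood $D' \subset D$ of $z$ as the graph of a $C^r$ function $\sigma_0 : U_0 \to \mathbb{E}^s$ for some disk $U_0 \subset \mathbb{E}^u$. A standard cone-field argument shows that the unstable cone around $\mathbb{E}^u$ is strictly forward-invariant under $D\phi_1$ on $V_0$ and that vectors inside it are expanded; hence as long as $D_t$ remains in $V_0$ it is a graph of a $C^r$ function $\sigma_t : U_t \to \mathbb{E}^s$, and an induction on $t$ yields bounds of the schematic form
\[
\|\sigma_{t+1}\|_{C^k} \le \lambda_s \lambda_u^{-k} \|\sigma_t\|_{C^k} + \varepsilon\, P_k\!\left(\|\sigma_t\|_{C^{k-1}}, \dots, \|\sigma_t\|_{C^0}\right), \qquad 0 \le k \le r,
\]
where $\varepsilon$ is the $C^r$-size of the nonlinear part of $\phi_1$ on $V_0$ and $P_k$ is a polynomial error coming from the chain rule. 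Since $\lambda_s \lambda_u^{-k} < 1$ and $\varepsilon$ can be made arbitrarily small by shrinking $V_0$, induction on $k$ yields exponential decay $\|\sigma_t\|_{C^r} \to 0$.

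Simultaneously, the domain $U_t$ expands roughly by the factor $\lambda_u$ at each step in the $\mathbb{E}^u$-direction, because the $\mathbb{E}^u$-component of $\phi_1$ dominates $\lambda_u$. In particular, the forward orbit of $z$ accumulates on $W^u_{\text{loc}}(p)$ and $U_t$ eventually contains the $\mathbb{E}^u$-projection $B' = \pi_u(B)$. Combining this covering with $\|\sigma_t\|_{C^r} \to 0$, for all sufficiently large $t$ the connected component $D_t$ of $\phi_t(D) \cap V$ through $\phi_t(z)$ is the graph over $B'$ of a $C^r$-function of arbitrarily small $C^r$-norm, and hence is $\varepsilon$-close to $B = \mathrm{graph}(0)$ in the $C^r$-topology, which is exactly what is required.

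The main obstacle is upgrading the elementary $C^1$ cone-field convergence to $C^r$ convergence: each application of the graph transform introduces higher-order chain-rule terms coming from the nonlinear part of $\phi_1$, and one must verify that the contraction factor $\lambda_s \lambda_u^{-k}$ absorbs these errors for all $k \le r$. This is handled inductively in $k$: the lower-order norms $\|\sigma_t\|_{C^j}$ for $j < k$ are already known to decay exponentially, so the polynomial error $P_k$ contributes a term that is small relative to the leading contraction, and a standard bootstrap closes the induction.
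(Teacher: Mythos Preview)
The paper does not give its own proof of this lemma: it is stated as a known technical result with a reference to \cite{AP}, and is then invoked as a black box in Section~\ref{s.nonlocally}. So there is no argument in the paper to compare your proposal against.

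Your sketch is the standard route---reduce to the time-one diffeomorphism, straighten $W^s_{\text{loc}}(p)$ and $W^u_{\text{loc}}(p)$ in a $C^r$ chart, write $D$ as a graph over $\mathbb{E}^u$, and iterate the graph transform with a cone-field argument---and it is essentially correct. One technical point deserves care: the assertion that by shrinking $V_0$ the nonlinear part of $\phi_1$ can be made to have arbitrarily small $C^r$-norm is only automatic at the $C^1$ level (the remainder and its first derivative vanish at $p$, but the higher derivatives need not become small on a small ball). The displayed recursion should therefore be written with the higher-order error controlled not by a small constant $\varepsilon$ multiplying $P_k$, but by a bounded constant times quantities that are already known to decay from the inductive hypothesis on $\|\sigma_t\|_{C^{k-1}},\dots,\|\sigma_t\|_{C^0}$. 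You already signal this bootstrap in your last paragraph, so the fix is just to phrase the intermediate inequality accordingly rather than to rely on $C^r$-smallness of the nonlinearity.
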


\subsection{Hyperbolic sets of diffeomorphisms}

The proof of Theorem~\ref{t.shadowing} relies on  shadowing.
Before proceeding to the proof we define these terms and other related terms.

For $f:X\rightarrow X$ a homeomorphism of a metric space and $\varepsilon>0$ an {\it $\varepsilon$-pseudo orbit } is a sequence $\{x_j\}_{l}^m$ where
\begin{itemize}
\item $l\in -\{\infty\}\cup \mathbb{Z}$, $m\in\mathbb{Z}\cup\{\infty\}$, $l<m$, and
\item $d(f(x_{j}), x_{j+1})<\varepsilon$ for all $j\in[m,l]$.
 \end{itemize}
 For an $\varepsilon$-pseudo orbit $\{x_j\}_l^m$ we say this sequence is $\delta$-shadowed by a point $x\in X$ if $d(f^j(x), x_j)<\delta$ for $j \in [m, l]$. The next result is a standard result in hyperbolic dynamics concerning shadowing of pseudo orbits.


\begin{thm}(Shadowing Lemma)
Let $\Lambda$ be a hyperbolic set for $f:M\rightarrow M$ a diffeomorphism. Then there exists neighborhood $U(\Lambda)$ such that for all $\delta>0$ there exists an $\varepsilon>0$ such that if $\{x_j\}_{-\infty}^{\infty} \subset U$ is an $\varepsilon$-pseudo orbit, then there exists $x\in M$ that $\delta$-shadows $\{x_j\}_{-\infty}^{\infty}$.
\end{thm}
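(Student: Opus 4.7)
The plan is to reduce the problem to a fixed point problem in a Banach space of bounded sequences of tangent vectors, solving separately along the stable and unstable directions of an extended hyperbolic splitting.

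First, I would extend the hyperbolic splitting $\mathbb{E}^s \oplus \mathbb{E}^u$ continuously from $\Lambda$ to a small neighborhood $U$, using standard cone-field arguments so that $Df$ retains uniform contraction and expansion estimates on the extended sub-bundles (at the price of slightly weaker constants). After shrinking $U$, I would equip a neighborhood of $\Lambda$ with an adapted (Lyapunov) metric, so that along the extended stable bundle $Df$ contracts by a factor $\lambda < 1$ and along the extended unstable bundle its inverse contracts by the same factor, with respect to a single norm on each tangent space.

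Next, for any $\varepsilon$-pseudo orbit $\{x_j\}$ in $U$, I would parametrize a candidate shadowing orbit as $y_j = \exp_{x_j}(v_j)$ with $v_j \in T_{x_j}M$ of norm less than $\delta$. The equation $f(y_j) = y_{j+1}$ then takes the form
\[
L_j v_j + N_j(v_j) + \eta_j = v_{j+1},
\]
where $L_j = D_{x_j}f$ is uniformly hyperbolic in the adapted metric, $N_j$ is a $C^1$-small nonlinearity satisfying $N_j(0) = 0$ and $\|DN_j\|$ arbitrarily small once $\delta$ is small, and $\|\eta_j\| \leq C\varepsilon$ encodes the failure of $\{x_j\}$ to be an exact orbit. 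Writing $v_j = v_j^s + v_j^u$ with respect to the splitting, I would recast this as a fixed point equation $\mathbf{v} = T(\mathbf{v})$ on the closed ball of radius $\delta$ in $\ell^\infty(\mathbb{Z})$ of tangent sequences, where $T$ solves the stable component forward in $j$ and the unstable component backward in $j$. Both Duhamel-type sums converge because the linear part contracts by $\lambda$ in the relevant time direction.

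Finally, the estimate $\|T(\mathbf{v}_1) - T(\mathbf{v}_2)\|_\infty \leq \frac{\sup_j\|DN_j\|}{1-\lambda}\|\mathbf{v}_1 - \mathbf{v}_2\|_\infty$ together with $\|T(0)\|_\infty \leq \frac{C\varepsilon}{1-\lambda}$ shows that, for $\delta$ small and $\varepsilon$ correspondingly smaller, $T$ is a contraction of the $\delta$-ball into itself. Banach's fixed point theorem yields a unique $\mathbf{v}$, and the orbit of $x := y_0 = \exp_{x_0}(v_0)$ then $\delta$-shadows $\{x_j\}$. The main obstacle is ensuring that the bundle extension, the adapted norm, and the control of the nonlinearities $N_j$ are compatible uniformly in $j$; this is the step that forces $U$ to be chosen before $\delta$ is chosen before $\varepsilon$, and it is handled by the standard observation that in an adapted metric the size of $DN_j$ tends uniformly to $0$ as $\operatorname{diam} U \to 0$.
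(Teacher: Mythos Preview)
The paper does not prove this statement at all: it is quoted in Section~\ref{s.background} as ``a standard result in hyperbolic dynamics'' and no argument is given. So there is nothing in the paper to compare your proposal against.

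That said, your sketch is the standard contraction-mapping proof (as in Katok--Hasselblatt~\cite{KH} or Pilyugin~\cite{Pilyugin}, both cited by the paper), and the outline is sound. One point that deserves a sentence of care: when you extend the splitting $\mathbb{E}^s\oplus\mathbb{E}^u$ continuously off $\Lambda$, the extended sub-bundles are in general \emph{not} $Df$-invariant, so $L_j$ will not be block-diagonal with respect to your decomposition $v_j=v_j^s+v_j^u$. The usual fix is either to absorb the off-diagonal pieces of $L_j$ into the nonlinearity $N_j$ (they are small once $U$ is small, since the extended bundles are close to the true invariant ones on $\Lambda$), or to work with cones rather than bundles. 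Either way the contraction estimate goes through exactly as you wrote; just make explicit which route you take so that the ``uniform in $j$'' compatibility you flag at the end is genuinely verified.
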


Let $f:M\rightarrow M$ be a diffeomorphism and $\Lambda$ be a hyperbolic set for $f$. For $\varepsilon>0$ sufficiently small and $x\in \Lambda$ the
\textit{local stable and unstable manifolds} are respectively:
$$
\begin{array}{llll}
W_{\varepsilon}^{s}(x,f)=\{ y\in M\, |\textrm{ for all }
n\in\mathbb{N}, d(f^{n}(x), f^{n}(y))\leq\varepsilon\},\textrm{
and}\\
W_{\varepsilon}^{u}(x,f)=\{ y\in M\, |\textrm{ for all }
n\in\mathbb{N}, d(f^{-n}(x), f^{-n}(y))\leq\varepsilon\}.
\end{array}$$
The \textit{stable and unstable manifolds} are respectively:
$$
\begin{array}{llll}
W^s(x,f)=\bigcup_{n\geq 0}f^{-n}\left(
W_{\varepsilon}^s(f^n(x),f)\right), \textrm{ and}\\
W^u(x,f)=\bigcup_{n\geq
0}f^{n}\left(W_{\varepsilon}^u(f^{-n}(x),f)\right). \end{array}
$$
For a $C^r$ diffeomorphism $f$ the stable and unstable manifolds of a hyperbolic set are $C^r$
injectively immersed submanifolds.

For $\Lambda$ a hyperbolic set we know that if $\varepsilon$ is sufficiently small and $x,y\in\Lambda$, then $W^s_{\varepsilon}(x)\cap W^u_{\varepsilon}(y)$ consists of at most one point. For such an $\varepsilon>0$ define
$$D_{\varepsilon}=\{(x,y)\in \Lambda\times \Lambda\, |\, W^s_{\varepsilon}(x)\cap W^u_{\varepsilon}(y)\in\Lambda\}$$ and $[\cdot,
\cdot]:D_{\varepsilon}\rightarrow \Lambda$ so that
$[x,y]=W^s_{\varepsilon}(x)\cap W^u_{\varepsilon}(y)$.

We will also need openness of hyperbolicity.

\begin{lem}\label{openhyp}
Let $\Lambda\subset M$ be a hyperbolic set of the diffeomorphism $f: U\to M$. Then for any open neighborhood $V\subset U$ of $\Lambda$ and every $\delta>0$ there exists $\varepsilon>0$ such that if $f': U\to M$ and $d_{C^1}(f|_V, f')<\varepsilon$ there is a hyperbolic set $\Lambda'=f'(\Lambda')\subset V$ for $f'$ and a homeomorphism $h:\Lambda'\to\Lambda$ with $d_{C^0}(\text{Id},h)+d_{C^0}(\text{Id},h^{-1})<\delta$ such that $h\circ f'|_{\Lambda'}=f|_{\Lambda}\circ h$. Moreover, $h$ is unique when $\delta$ is sufficiently small.
\end{lem}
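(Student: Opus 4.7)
The strategy is shadowing applied to the perturbed map $f'$. The first ingredient is a uniform version of the fact that hyperbolicity is a $C^1$-open condition: I extend the $Df$-invariant splitting $\mathbb{E}^s \oplus \mathbb{E}^u$ over $\Lambda$ to continuous stable/unstable cone families $C^s, C^u$ on a compact neighborhood $V_0 \Subset V$ of $\Lambda$, with $Df$-invariance and the usual expansion/contraction rates. For $f'$ sufficiently $C^1$-close to $f$ on $V$, the same cones are $Df'$-invariant with comparable rates. Consequently every compact $f'$-invariant subset of $V_0$ is hyperbolic for $f'$, and $f'$ on $V_0$ satisfies a uniform version of the Shadowing Lemma together with expansivity, with constants independent of $f'$ in a small $C^1$-ball around $f$.

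With this in place, I construct a candidate map $\tilde h : \Lambda \to V_0$ as follows. Fix $\delta > 0$ smaller than half the common expansivity constant, and let $\eta = \eta(\delta)$ be the corresponding shadowing precision on $V_0$. Shrink the $C^1$-neighborhood of $f$ so that $\|f - f'\|_{C^0(V_0)} < \eta$. Then for each $x \in \Lambda$, the full $f$-orbit $\{f^n(x)\}_{n\in\mathbb{Z}} \subset \Lambda$ is an $\eta$-pseudo-orbit for $f'$, hence is $\delta$-shadowed by a unique point $\tilde h(x) \in V_0$, with $d(\tilde h(x), x) < \delta$. Uniqueness of the shadowing point (by expansivity of $f'$ on $V_0$) forces $f' \circ \tilde h = \tilde h \circ f$ on $\Lambda$, since both $f'(\tilde h(x))$ and $\tilde h(f(x))$ shadow the shifted pseudo-orbit $\{f^{n+1}(x)\}$. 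Continuity of $\tilde h$ follows from the same uniqueness together with continuous dependence of pseudo-orbits on the base point; injectivity follows from expansivity of $f$ on $\Lambda$, since two distinct points with the same image would yield two $f$-orbits uniformly close to a common $f'$-orbit and hence to each other.

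I then set $\Lambda' := \tilde h(\Lambda) \subset V_0 \subset V$ and $h := \tilde h^{-1} : \Lambda' \to \Lambda$. By construction $\Lambda'$ is compact, $f'$-invariant, contained in $V_0$, and hence hyperbolic for $f'$ by the cone-field argument of the first paragraph; the homeomorphism property of $h$ follows from compactness of $\Lambda$ together with continuity and bijectivity of $\tilde h$. The equivariance $h \circ f'|_{\Lambda'} = f|_\Lambda \circ h$ and the estimate $d_{C^0}(\mathrm{Id}, h) + d_{C^0}(\mathrm{Id}, h^{-1}) < \delta$ (after halving $\delta$ at the outset) are immediate. For uniqueness when $\delta$ is small: given two such conjugacies $h_1, h_2$, for each $y \in \Lambda$ the points $h_1^{-1}(y)$ and $h_2^{-1}(y)$ both generate $f'$-orbits that shadow the $f$-orbit of $y$, so these two $f'$-orbits remain $2\delta$-close for all times; expansivity of $f'$ on $V_0$ forces them to coincide.

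The main obstacle is the uniform bookkeeping of constants: the cone apertures on $V_0$, the hyperbolic rates, the expansivity constant for $f'$ on $V_0$, the shadowing modulus $\eta \mapsto \delta$, and the final required $C^1$-closeness of $f'$ to $f$ all have to be chosen in the correct order so that they are simultaneously valid for every $f'$ in a common $C^1$-neighborhood of $f$. Once this cascade of dependencies is correctly arranged, every step above is routine and relies only on the uniform shadowing/expansivity package inherited from $f$.
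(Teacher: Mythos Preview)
The paper does not prove this lemma at all: it is stated in the background section as a standard structural-stability result for hyperbolic sets (the formulation is essentially that of Katok--Hasselblatt), with no argument given. So there is nothing to compare against on the paper's side.

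Your shadowing-based proof is one of the standard routes to this result and is correct in outline. A couple of minor points worth tightening: when you invoke a ``uniform Shadowing Lemma for $f'$ on $V_0$'' you are really using the neighborhood version of shadowing (valid under invariant cone fields, not just over an invariant set), and you should say explicitly that the $\delta$-shadowing orbit of the $\eta$-pseudo-orbit $\{f^n(x)\}$ stays inside $V_0$ because it remains within $\delta$ of $\Lambda$; this is what makes expansivity of $f'$ on $V_0$ applicable and closes the loop. With that clarified, the cascade of constants you describe at the end can indeed be arranged, and the argument goes through.
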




\subsection{Normal hyperbolicity}

For embedding constructions into higher dimensions, we will need the notion of normal hyperbolicity. A normally hyperbolic invariant manifold (NHIM) is a generalization of a hyperbolic fixed point and a hyperbolic set.
Fenichel proved
that NHIMs and their stable and unstable manifolds are persistent under perturbation \cite{fenichel2}, \cite{fenichel3}. We define NHIMs for maps, but the definition for flows is similar (and more technical).

\begin{defn}\label{normalhyp}
Let $M$ be a compact smooth manifold and $f: M\to M$ a diffeomorphism. Then an $f$-invariant submanifold $\Lambda$ of $M$ is said to be a \textit{normally hyperbolic invariant manifold} if there exist constants $0<\mu^{-1}<\lambda<1$ and $c>0$ such that
\begin{itemize}
\item $T_{\Lambda} M=T\Lambda\oplus \E^s\oplus\E^u$
\item $(Df)_x\E^s(x)=\E^s(f(x))$ and $(Df)_x\E^u(x)=\E^u(f(x))$ for all $x\in\Lambda$
\item $\|Df^n v\|\leq c\lambda^n\|v\|$ for all $v\in\E^s$ and $n>0,$
\item $\|Df^{-n} v\|\leq c\lambda^n\|v\|$ for all $v\in\E^u$ and $n>0,$ and
\item $\|Df^n v\|\leq c\mu^{|n|}\|v\|$ for all $v\in T\Lambda$ and $n\in\Z.$
\end{itemize}
\end{defn}

Adapting the above for flows gives us an important result (\cite[p.~215]{fenichel}) which says that if a $C^r$ vector field $Y$ in some $C^1$ neighborhood of our original vector field $X$ (equated with a flow $\phi,$ under which $M$ is invariant) there is a $C^r$ manifold $M_Y$ invariant under $Y$ and $C^r$ diffeomorphic to $M.$ An immediate consequence of this is that the dynamics on $M_Y$ under the vector field $Y$ are a perturbation of the dynamics of $M$ under $X.$




\section{Nonlocally maximal sets for flows}\label{s.nonlocally}

The foundation of the proof of Theorem~\ref{t.flows} is the classic Plykin map, see for instance~\cite[p.~537-41]{KH} for a construction of the Plykin attractor. The first author used this map with some modifications to prove Theorem 1.3 in~\cite{Fis1} on the existence of hyperbolic sets not included in locally maximal ones. We then elaborate on Hunt's PhD dissertation, in which he extends the Plykin map to a flow \cite{hunt}. Using that flow, we apply the construction in~\cite{Fis1} to Hunt's flow and address the technicalities. Figure \ref{f.plykin} shows one iteration of a fundamental domain of the Plykin attractor.
\begin{figure}
\begin{center}
\includegraphics[width=0.9\textwidth]{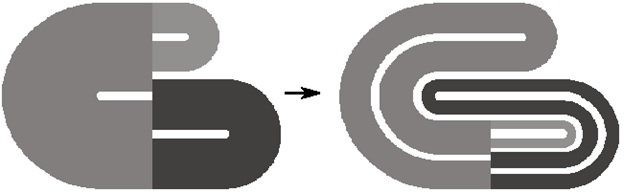}
\caption{The Plykin map.}\label{f.plykin}
\end{center}
\end{figure}

\subsection{Hunt's Flow}


Hunt starts the Plykin attractor construction over from the beginning in the flow case.
The construction is outlined below, but the overarching idea is to view each iterate of the map as a cross-section of a solid 2-torus and connect each point $x$ to its image $f(x)$ via a continuous path around the torus. 


We now include a brief summary of the construction of the flow \cite[p. 53-67]{hunt}.
Hunt breaks the construction into stages. The first is the stretching stage.  The next stage is the folding stage.  The explicit $2\pi$-periodic formula for the flow performs these stages in turn, with smooth transitions between them.

The first stage involves squashing and stretching, which is what gives the flow hyperbolicity.
At this point it is necessary to shift the picture to ensure the original set $A$ flows back into itself. By forcing the flow to satisfy $\phi_1(A)\subsetneq A,$ there are natural constraints placed on the center coordinates.

The next stage is to fold the stretched-out region back into the desired location. Again, this places constraints on the coordinates of the different regions.

After ten constraints have been placed on the region's center, horizontal and vertical shifts, there are still two free variables. 
These are equivalent to the fact that the total horizontal and vertical lengths of the attractor can be decided arbitrarily.


\subsection{Proof of Theorem~\ref{t.flows}}

First we would like to sketch the main steps of the construction of the example. We embed Hunt's Plykin attractor in an invariant solid 2-torus $T_1$. After this we embed $T_1$ into a bigger solid torus $T_2$ (we need this step to be able later to embed the construction into arbitrarily manifold). Next we modify the flow using the construction from \cite{Fis1}. Finally we embed $T_2$ into an arbitrarily manifold of dimension at least 3.

\textbf{Step 1.} As a first step we take Hunt's Plykin flow on the solid 2-torus.
Hunt's flow $\phi$ maps a closed set $A$ (containing the Plykin attractor $\Lambda_a$) strictly into itself -- specifically, $\phi_t(A)\subset\interior(A)$ for all $t>0$ \cite[p.~64]{hunt}. Embed this set $A$ into a solid closed 2-torus, $T_1,$ so $A\subset\interior(T_1).$ Extend the flow $\phi$ to contract in $\interior(T_1\setminus A)$ such that $\phi_t(x)\to A$ as $t\to\infty$ for all $x\in\interior(T_1\setminus A),$ and so that every $x\in\partial T_1$ is period 1.

\textbf{Step 2.} We now have a solid closed 2-torus $T_1$ where $\Lambda_a\subsetneq A\subsetneq T_1,$ where every point in $\interior (T_1)$ asymptotically approaches $\Lambda_a$ in forward time, and every point in $\partial T_1$ is period 1. To later embed this system into a larger manifold while maintaining smoothness, we must enclose $T_1$ in a strictly larger solid closed 2-torus, $T_2.$ We need to extend $\phi$ so that every point on $\partial T_2$ is fixed and the flow is still $C^2.$ For arbitrary $r\in\N,$ this extension is possible in a $C^r$ fashion. First set the boundary conditions: $\phi_t(x)=x$ for all $x$ in a neighborhood of $\partial T_2, t\in\R,$ and $\phi_{k}(x)=x$ for all $x\in\partial T_1, k\in\N.$ In $T_2\setminus T_1,$ in between the boundaries, smoothly vary from fixed points on $T_2$ to period-1 points on $T_1.$ This maintains the smoothness we need while gaining the properties we want.

\textbf{Step 3.} Fix some $p\in\partial T_1.$ Take an open neighborhood $U$ of $\mathcal{O}(p),$ small enough to be disjoint from $\partial T_2$ and the attractor, and alter $\phi$ in the following way to make $p$ a hyperbolic periodic saddle point: alter $\phi$ on $U\cap\partial T_1$ such that $x\in U\cap\partial T_1$ implies $\phi_t(x)\to\mathcal{O}(p)$ as $t\to\infty$ (in the sense that for any $\varepsilon>0$ there exists some $\tau$ such that $t\geq\tau$ implies $d(\phi_t(x),y)<\varepsilon$ for some $y\in\mathcal{O}(p)$. Note that this works because $p$ is periodic). Thus, $W^s(p)=U\cap\partial T_1.$ Again in a smooth way, change $\phi$ in $U\setminus \partial T_1$ to give $p$ an unstable manifold, as in Figure \ref{crosssection}. Note at this point it is clear to see that $p$
is a hyperbolic point of period 1. The center manifold of $p$ is isometric to $S^1$ and each unstable manifold along $\mathcal{O}(p)$ is diffeomorphic to a line segment. Thus we have that $W^{cu}_{\text{loc}}(p)$ is diffeomorphic to a cylinder. Consider $W^{cu}(p)\setminus\mathcal{O}(p),$ which is now (locally) a disjoint union of two cylinders. One of these two components is entirely inside the basin of attraction, and one is entirely outside (reference Figure \ref{crosssection}). Label as $W^*(p)$ the component of $W^{cu}(p)\setminus\mathcal{O}(p)$ which lies entirely inside the basin of attraction (equivalently, $W^*(p)=W^{cu}(p)\cap\interior(T_1)).$ Label as $W^*_0(p)$ the component of $W^u(p)$ which lies entirely inside the basin of attraction. Note that for all $x\in W^*(p)$ we have $\mathcal{O}(x)\subset\interior(T_1).$

\begin{figure}
\begin{center}
\includegraphics[width=5in]{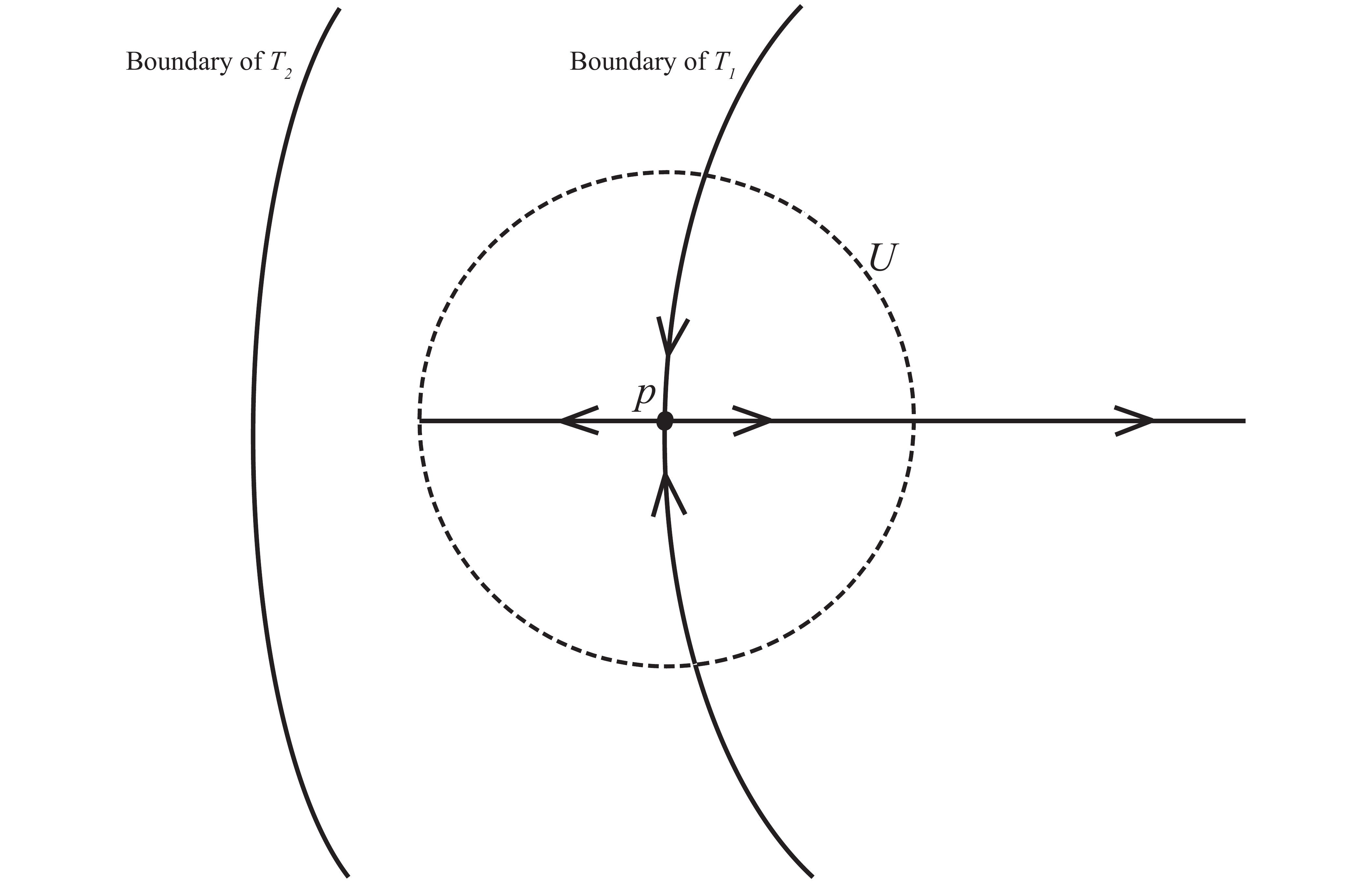}
\caption{View of a cross section at $p.$}\label{crosssection}
\end{center}
\end{figure}

We know that $\overline{W^{cs}(x)}=\overline{W^s(\Lambda_a)}$ for all $x\in\Lambda_a$ (Section \ref{uhaproperties}). By above we have that $W^*(p)\subset\interior(T_1),$ and that $\interior(T_1)=W^s(\Lambda_a).$ Consider some periodic $q_0\in\Lambda_a.$ Then $W^*_0(p)\subset W^*(p)\subset W^s(\Lambda_a) \subset \overline{W^{cs}(q_0)}.$ Given any point of $W^*_0(p),$ then, there must exist some point in $W^{cs}(q_0)$ arbitrarily close to it. Consider some $z\in W^*_0(p).$ Since $q_0$ is period-$R$ for some $R\in\N$ there exists an $R\in\R$ such that for all $\varepsilon>0$ there exists a $t\in [0,R]$ such that $d(\phi_t(q_0),z)<\varepsilon.$ If $z\in W^{cs}(q_0)$ then we're done since then $z\in W^s(\phi_t(q_0))$ for some $t\in [0,R],$ so we can assume $z\notin W^{cs}(q_0).$ Then $z$ must be a limit point of $W^{cs}(q_0)$ 
Perturb the flow in a neighborhood of $z$ (analogously to what is done in Figure \ref{quadratictan}) so that $z\in W^{cs}(q_0).$

Given $z\in W^{u}(p)\cap W^{s}(q)$ for some period-1 $q\in\Lambda_a,$ the manifolds $W^{u}(p)$ and $W^{s}(q)$ can have a transverse intersection, after another perturbation, and we justify this as follows: by construction, we already know that $z\in W^{u}(p)\cap W^{s}(q).$ In \cite[p.~1508]{Fis1}, a perturbation of the map is made in a small neighborhood of $f^{-1}(z),$ which, since the map is continuously differentiable, ensures that $z\in W^u(p)\pitchfork W^s(q)$ in the two-dimensional case. Now in the three-dimensional case, extend this perturbation to the solid 2-torus by perturbing $\phi$ in a sufficiently small neighborhood of $\phi_{-1}(z)$ (analogously to what is done in Figure \ref{quadratictan}) so that $W^{u}(p)\cap W^s(q)$ transversally at some time $t.$

Here we will need two definitions (see \cite[p.~1495]{Fis1}). A hyperbolic set $\Lambda$ for a $C^1$ flow has a \textit{heteroclinic tangency} if there exist $x,y\in\Lambda$ such that $W^s(x)\cap W^u(y)$ contains a point of tangency. A point of \textit{quadratic tangency} for a $C^2$ flow is defined as a point of heteroclinic tangency where the curvature of the stable and unstable manifolds differs at the point of tangency.

By transversality and continuity, as well as the fact that Hunt's flow can be adapted to be $C^2$
\cite[p.~67,70]{hunt}, there must exist a neighborhood $J_0\subset W^{u}_{\text{loc}}(q)$ of $q$ and a neighborhood $I_0\subset W^{u}(p)$ of $z$ such that for each $x\in I_0$ we have $x\in W^s_{\text{loc}}(y)$ for some $y\in J_0.$ Now at some $z'\in I_0\setminus\{z\},$ deform the flow in a sufficiently small neighborhood of $\phi_{-1}(z')$ to create a point of quadratic tangency, $w\in I_0,$ between $W^u(p)$ and $W^{cs}(k)$ for some $k\in J_0$ (see Figure \ref{quadratictan}). Let $I$ be the segment of $W^u(p)$ from $z$ to $w,$ and let $J$ be the segment of $W^u(q)$ from $q$ to $k.$

\begin{figure}
\begin{center}
\includegraphics[width=0.9\textwidth]{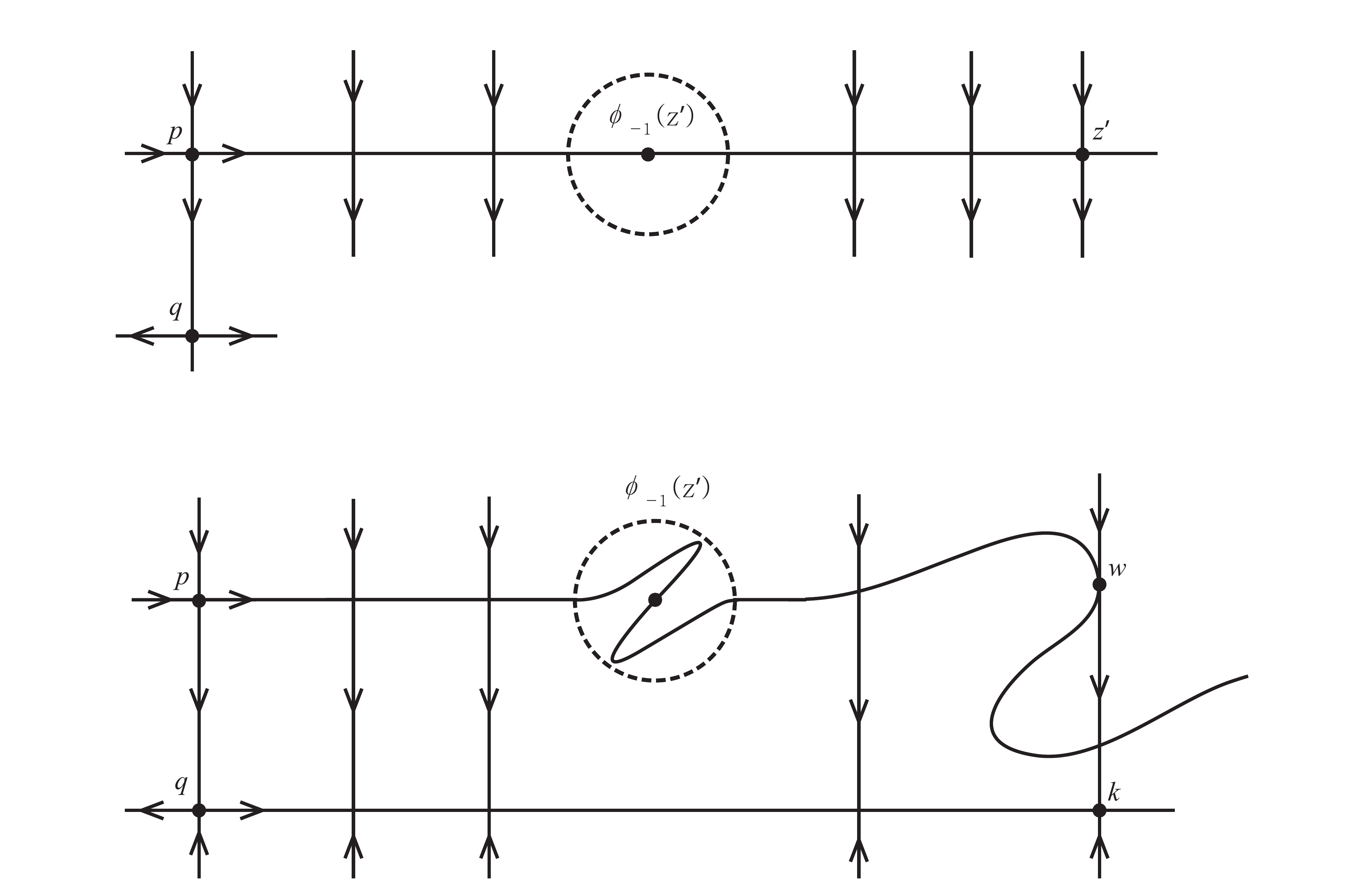}
\caption{Deforming to obtain $w$.}
\label{quadratictan}
\end{center}
\end{figure}

\textbf{Step 4.} We have a flow on a solid torus that is the identity on $\partial T_2.$ To embed this system into any smooth manifold of dimension 3, first embed $T_2$ into a solid sphere $S_3,$ and set $\phi_t(x)=x$ for all $x\in S_3\setminus T_2, t\in\R.$ Scale $S_3$ to be as small as necessary, set $\phi_t(x)=x$ for all $x\in M\setminus S_3, t\in\R,$ and now the example extends to any smooth 3-manifold.

Using normal hyperbolicity (see Definition \ref{normalhyp} and the remarks immediately following), we can embed our example into any smooth manifold $M$ of dimension greater than 3, as follows: first take the solid sphere $S_3$ from above, such that $\phi_t$ is the identity on $\partial S_3.$ For a manifold $M$ of dimension $n>3,$ embed $S_3$ into a solid $n$-sphere $S_n$. In $\interior(S_n\setminus S_3),$ extend $\phi$ so that it contracts sufficiently strongly in all directions towards $S_3$ to make $S_3$ a normally hyperbolic invariant manifold with respect to the flow $\phi.$ We need $\phi$ to fix every point in $\partial S_n$ to ensure the system is easily embeddable into larger manifolds, but we simultaneously need $\phi$ to contract sufficiently strongly to make $S_3$ a normally hyperbolic invariant manifold. Consider a small neighborhood $N$ of $\partial S_n.$ In $N\cap S_n,$ use a smooth bump function to let $\phi$ smoothly vary from sufficiently strong contraction (towards $S_3,$ as previously mentioned) in $S_n\setminus N$ to fixing every point $x\in\partial S_n$ Lastly, define $\phi$ to fix every point in $M\setminus S_n.$ Our example is now embeddable into $M.$

Now that we have combined Hunt's and Fisher's constructions, we need to prove hyperbolicity of the relevant set.

We will show $\Lambda = \Lambda_a\cup {\mathcal{O}(p)}\cup {\mathcal{O}(z)}$ is hyperbolic under $\phi$. Certainly $\mathcal{O}(p)$, $\mathcal{O}(z)$, and $\Lambda_a$ are (at least forward-) invariant under $\phi$, by definition. By construction, $\phi_t(z)$ converges to $\mathcal{O}(q)$ as $t\rightarrow \infty$ and converges to $\mathcal{O}(p)$ as $t\rightarrow -\infty$. Since these are both in $\Lambda$, and we already know $\Lambda_a$ is closed, we have that $\Lambda$ is closed.

Let $\lambda_p, \lambda_q\in (0,1)$ be constants that guarantee hyperbolic behavior in $\mathcal{O}(p)$ and $\Lambda_a$, respectively. Let $y$ be an arbitrary element of $\mathcal{O}(p)$. We assume an adapted metric, so for $t>0$ and $v\in\mathbb{E}^s(y)$, $\|D\phi_t(y) v\| \leq \lambda_p^t\|v\|$, and for $t>0$ and $r\in\mathbb{E}^u(y)$, $\|D\phi_{-t}(y)r\|\leq \lambda_p^t\|r\|$, and similarly for $\mathcal{O}(q)$. Let $\lambda_{max}=\text{ max}(\lambda_p,\lambda_q)$. Then certainly the constant $\lambda_{max}$ guarantees hyperbolicity over $\mathcal{O}(p)$ and $\Lambda_a$, using the above definitions. Now select any $\lambda\in (\lambda_{max}, 1)$. Fix $t>0$. By continuity and the Inclination Lemma (Lemma \ref{inclinationlemma}) there exists an $\varepsilon > 0$ such that for all $x$ where d$(\phi_t(x),\mathcal{O}(p))< \varepsilon$, we have $v\in\mathbb{E}^s(x)$ implying $\|D\phi_t(x) v\| \leq \lambda^t\|v\|$ (similarly for $\E^u(x)$ as well as identical cases for $\mathcal{O}(q)$). In other words, $\phi$ is hyperbolic with constant $\lambda$ for any point $\varepsilon$-close to either $\mathcal{O}(p)$ or $\mathcal{O}(q).$

Since $\displaystyle \lim_{t\to\infty} \phi_t(z) = \mathcal{O}(q)$ and $\displaystyle\lim_{t\to\infty} \phi_{-t}(z) = \mathcal{O}(p),$ there exists a $T>0$ such that $\phi_T(z)$ is $\varepsilon$-close to $\mathcal{O}(q)$ and $\phi^{-T}(z)$ is $\varepsilon$-close to $\mathcal{O}(p)$. By the previous paragraph, we can guarantee hyperbolicity in $\varepsilon$-neighborhoods of $\mathcal{O}(p)$ and $\mathcal{O}(q)$. Now we only need guarantee hyperbolicity outside of those neighborhoods. Acting over the domains $t\in [-T,T]$ and $v\in\{a\in\mathbb{R}^3:\|a\| = 1\}$, the function $\displaystyle \|D\phi_t(z)v\|$ is bounded because it is a continuous function on a (union of) compact domain(s).

Therefore there exists a $C\geq 1$ such that, for any $x\in\Lambda$ and $t>0$, we have $\|D\phi_t(x)v\|\leq C\lambda^t\|v\|$ for all $v\in\mathbb{E}^s(x)$, and $\|D\phi_{-t}(x)w\|\leq C\lambda^t\|w\|$ for all $w\in\mathbb{E}^u(x)$. By definition, $\Lambda$ is hyperbolic.

Now that we have shown the system satisfies the relevant properties. As previously mentioned, our flow $\phi_t$ and Fisher's map $f$ coincide at integer values of time, due to the constant roof function. In other words, $\phi_n(x)=f^n(x)$ for all $n\in\Z, x\in M$ where $M$ is the manifold. Figure \ref{intervals} is helpful to keep in mind throughout the argument.

\begin{figure}
\begin{center}
\includegraphics[width=0.9\textwidth]{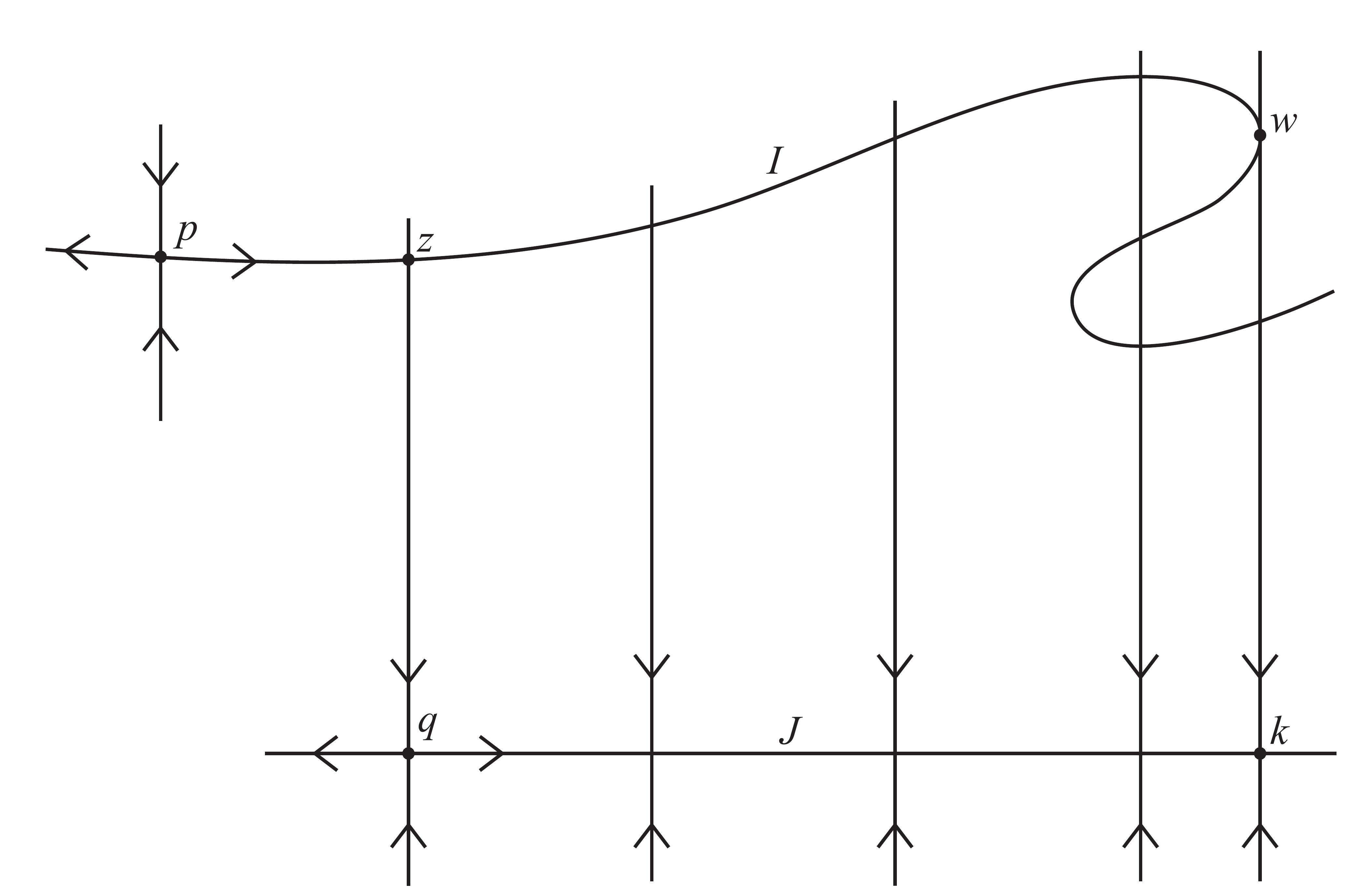}
\caption{Intervals I and J.}
\label{intervals}
\end{center}
\end{figure}

We have a hyperbolic set $\Lambda$ as a subset of a three-dimensional manifold. Since the original diffeomorphism acts on the unit disk, this three-dimensional flow lies in the solid 2-torus. Now suppose $\Lambda\subset\Lambda'$, where $\Lambda'$ is a locally maximal hyperbolic set. It is sufficient to show that some point in $\mathcal{O}(w)\subset\Lambda'$ fails to exhibit continuous splitting of the tangent space, since the quadratic tangency persists in time. Now fix $\delta$ and $\varepsilon$ to satisfy the local product structure.

Define $I$ to be the closed interval from $z$ to $w$ along $W^u(p).$ By construction, every point in $I$ is in the stable manifold of exactly one point in $W^{u}(q)$. Pick $r\in\Z$ such that $t\geq r$ implies $d(a,b)<\delta/2$ for all $a\in\phi_t(I), b\in W^{cu}(q).$ Let $J\subset W^{u}(q)$ be the set of all points $x\in W^{u}(q)$ such that $x\in W^s(\gamma)$ for some $\gamma\in \phi_r(I)$. The stable manifolds connecting $I$ to $J$ depend continuously on points in $I$, by hyperbolicity. Since $\phi$ is the continuous suspension of a diffeomorphism there exists a homeomorphism $\beta: [0,1]\rightarrow \phi_r(I)$. By above, there exists a homeomorphism $\sigma: \phi_r(I)\rightarrow J$. Therefore $\sigma\circ\beta: [0,1]\rightarrow J$ is a homeomorphism. In other words, since $\phi_r(I)$ is a path-connected interval, $J$ is also a path-connected interval.

Certainly $\phi_r(z)\in \phi_r(I)\cap\Lambda'$ since $z\in I$ and $\mathcal{O}(z)\subset \Lambda\subset\Lambda'$. Consider some $\alpha\in J$ such that $d(\alpha,q)<\delta/2$ (guaranteed to exist since $J$ is path-connected). Since $d(\phi_r(z),q)<\delta/2$, then $d(\phi_r(z),\alpha)<\delta$ by the triangle inequality. By the local product structure, there exists exactly one point $\mathcal{S}(\phi_r(z),\alpha)\in W_{\varepsilon}^u(\phi_r(z))\cap W^s_{\varepsilon}(\alpha)\subset \Lambda'.$ Since we are adapting Fisher's flow to the solid 2-torus $T_1$ with a constant roof function -- $\phi_n(x)=f^n(x)$ for all $n\in\Z, x\in T_1$.  Note,  there need be no $\varepsilon$ time shift as in the definition of local product structure. In this case, then, our definition for local product structure for flows can be taken to be that for all $\varepsilon>0$ there is a $\delta>0$ such that given $x,y\in\Gamma$ with $d(x,y)<\delta$ then $\mathcal{S}(x,y)=W^u(x)\cap W^s(y)=\{b\}\subset \Gamma.$ (Compare with Definition \ref{localproductstructure}.) Note that in the perturbed case, the $\varepsilon$ time shift in the flow definition of local product structure is indeed necessary but the argument still works since the unstable and stable manifolds depend continuously on each other, by hyperbolicity.

Take any point $e$ along the interval lying in $J$ from $q$ to $\alpha$. Thus $d(e,q)<d(\alpha,q)<\delta/2$ so similarly we get a unique $\mathcal{S}(\phi_r(z),e)\in \phi_r(I)\cap\Lambda'$. Since $e$ was arbitrary, we now have a closed interval of points from $\phi_r(z)$ to $\mathcal{S}(\phi_r(z),\alpha)$ along $\phi_r(I)\cap\Lambda'$.

Thus $z':=\mathcal{S}(\phi_r(z),\alpha)\in \phi_r(I)\cap\Lambda'$. Inductively repeat the above process along $J$ to see that $\phi_r(I)\subset\Lambda'$. Thus we see that the endpoint $\phi_r(w)$ of the interval $\phi_r(I)$ is also a point in $\mathcal{O}(w)$ where the hyperbolic splitting of the tangent space fails to continuously extend. We thus have found that $\Lambda$ cannot be contained in a locally maximal hyperbolic set, which proves the theorem.

We will see that every aspect of the system is robust under perturbation, so the entire system is as well. For the perturbed system we will use $\tilde{p}$ to denote the continuation of $p,$ and we will similarly denote the continuations of the other aspects of the construction. We will first prove openness for 3-manifolds, and then for the $n$-dimensional case. Since transversality is trivially open, and hyperbolicity is open by Lemma \ref{openhyp}, it is sufficient to show that there remains a point $\tilde{w}\in W^u(\tilde{p})\cap W^s(\tilde{u})$ for some $\tilde{u}\in W^{cu}_{\text{loc}}(\tilde{q}).$ Under a small perturbation Figure \ref{quadratictan} remains identical in effect. By construction, the stable manifolds for all the $x\in W^{cu}_{\text{loc}}(\tilde{q})$ locally foliate the region, so there must exist a point $\tilde{u}\in W^{cu}_{\text{loc}}(\tilde{q})$ and a point $\tilde{w}\in W^{cs}(\tilde{u})\cap W^u(\tilde{p})$ such that the one-dimensional path $W^u(\tilde{p})$ remains tangent to the two-dimensional plane $W^{cs}(\tilde{u})$ at $\tilde{w}$ -- specifically, $T_{\tilde{w}} W^u(\tilde{p})\subsetneq T_{\tilde{w}} W^{cs}(\tilde{u}).$

Since every other part of the system is known to be open, and we have just shown that the curve through $w$ must remain tangent to some center-stable manifold even after perturbation, we have that the entire flow is open in the function space for 3-manifolds. Once there is a perturbation made in the time direction, the roof function is no longer constant so using the local product structure to show $\tilde{w}\in\widetilde{\Lambda}'$ requires use of the time-shift. The time-shift must be bounded by the $\varepsilon$ from the local product structure, but for a sufficiently small perturbation we have the desired bound. The argument then works similarly: pick $\tilde{\alpha}\in W^u(\tilde{q})$ to be $\delta$-close to $\phi_{r}(\tilde{z})$ for $r$ sufficiently large. For some $t$ where $|t|<\varepsilon$ we will get $\mathcal{S}(\phi_{r}(\tilde{z}),\tilde{\alpha})\in {\Lambda'}.$ Continue along $\phi_{r}(\tilde{I})$ as before, using possibly different time-shifts at every iteration, to see $\tilde{w}\in\widetilde{\Lambda}'.$

For the $n$-dimensional case the only change made in the argument is with regards to the dynamics in $S_n,$ the solid $n$-sphere from the construction in which the invariant set $S_3\supset T_2$ was embedded. Using \cite[p.~205]{fenichel}, we make the contraction in $S_n\setminus S_3$ sufficiently strong so that
for a $C^1$ perturbation made to our flow $\phi_t,$ there remains some invariant manifold $\tilde{S_3}$ diffeomorphic to $S_3.$ This means that there is a normally hyperbolic invariant manifold diffeomorphic to the previous one, so the flow $\tilde{\phi}$ restricted to $\tilde{S_3}$ is a small perturbation of $\phi$ restricted to $S_3$. $\Box$

\section{Premaximality}\label{s.premaximal}

Before we proceed to the proof of Theorems~\ref{thmPremaxFlow} and \ref{t.shadowing} let us review results by Anosov on premaximality \cite{Ano1, Ano2, Ano3, Ano4, Ano5}.
As was mentioned in Section \ref{secIntro}, Theorem \ref{t.anosov} implies that premaximality is an intrinsic property.

Let us recall the main idea of the proof of Theorem \ref{t.anosov}. For $\delta > 0$ denote $P(\delta)$ as the set of $\delta$-pseudotrajectories $\{y_n \in \Lambda\}_{n \in \ZZ}$ endowed with the Tikhonov product topology. Consider the shift map $\sigma: P(\delta) \to P(\delta)$ defined as $\sigma(\{y_n\}) = \{y_{n+1}\}$. The Shadowing Lemma implies that for any $\ep > 0$ there exists $\delta > 0$ such that for any $\{y_n\} \in P(\delta)$ there exists point $x$ such that
\begin{equation}\label{eqS1}
  \dist(y_n, f^n(x)) < \ep.
\end{equation}
Note that due to the expansivity property we know for small enough $\ep > 0$ that  such a point $x$ is unique. Fix such an $\ep > 0$ and a corresponding $\delta$ from the shadowing lemma.
Consider the map $T: P(\delta) \to M$ defined by the condition that for $\{y_n\} \in P(\delta)$ the point $x = T(\{y_n\})$ is the unique point satisfying \eqref{eqS1}. It is easy to show that
\begin{equation}\label{eqS2}
  T \circ \sigma = f \circ T.
\end{equation}
Now let us consider $d > 0$ and a small neighborhood $U \subset B(d, \Lambda)$ of $\Lambda$ and a point $z$ such that $O(z) \subset U$. There exists a sequence of points $\{y_n\}$ satisfying
\begin{equation}\label{eqd}
\dist(y_n, f^n(z)) < d.
\end{equation}
For any $\delta > 0$ there exists a $d> 0$ such that inequality \eqref{eqd} implies $\{y_n\} \in P(\delta)$ and 
\begin{equation}\label{eqS3}
T(\{y_n\}) = z.
\end{equation}

We can similarly define for $\Lambda'$ and $f'$ sets $P'(\delta)$ and maps $T'$, $\sigma'$. Note that similarly to \eqref{eqS2} the equality
\begin{equation}\label{eqS4}
T' \circ \sigma' = f' \circ T'
\end{equation}
holds.

For any $\delta'_1 > 0$ there exists $\delta_1 > 0$ such that
\begin{equation}\label{eqSS1}
h(P(\delta_1)) \subset P'(\delta'_1).
\end{equation}
(Recall that $h : \Lambda \to \Lambda'$ is a conjugacy between $\Lambda$ and $\Lambda'$.)
Similarly for any $\delta_2$ there exists $\delta'_2$ such that
\begin{equation}\label{eqSS2}
h^{-1}(P'(\delta'_2)) \subset P(\delta_2).
\end{equation}

Equations \eqref{eqS2}, \eqref{eqS3}, \eqref{eqS4}, \eqref{eqSS1},   and \eqref{eqSS2}
allow us to conclude Theorem \ref{t.anosov}.
For a detailed exposition of the proof we refer the reader to the original paper \cite{Ano5}.


To study premaximality of the zero-dimensional hyperbolic sets we need the following notion.
Let $A$ be an ``alphabet'': $A = \{1, \dots, n\}$ and $\Omega = A^{\mathbb{Z}}$ equipped with the Tikhonov topology and metric
$$
\dist(a, b) = \sum_{i \in \mathbb{Z}} \frac{1}{2^{|i|}}I(a_i, b_i),
$$
where $a = (a_i), b = (b_i) \in \Omega$ and $I(a_i, b_i)$ equal to 0 if $a_i = b_i$ and 1 otherwise. Consider the shift map $\sigma = \Omega \to \Omega$ defined as $(\sigma(a))_i = a_{i+1}$.
Consider some set $W$ of \textit{admissible} words of lengths $k \geq 1$ in the alphabet $A$ and consider $M_W \subset \Omega$ such that all subwords of all $a \in M_{W}$ of length $k$ are admissible. The following was proved in 1960s (see for instance \cite{Alexeev}).
\begin{thm}
 Consider a set $\Lambda \subset \Omega$. The following holds
 \begin{enumerate}
 \item the set $\Lambda$ is locally maximal for $\sigma$ if and only if there exists $k \geq 1$ and set of admissible words $W$ such that $\Lambda = M_W$;
 \item the set $\Lambda$ is premaximal.
 \end{enumerate}
\end{thm}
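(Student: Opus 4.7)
The plan is to prove both parts via a standard ``window'' argument on cylinders in $\Omega$: the Tikhonov metric makes closeness equivalent to agreement on a long central block, so every topological condition translates into a combinatorial condition on finite subwords, and subshifts of finite type arise naturally.

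For part (1), I would handle the two directions as follows. For the ``if'' direction, suppose $\Lambda = M_W$ with $W$ a set of admissible words of length $k$. Choose $\ep > 0$ small enough that $\dist(a,b) < \ep$ forces $a_i = b_i$ for all $|i| \leq k$, and set $U = \{a \in \Omega : \exists b \in \Lambda \text{ with } a_i = b_i \text{ for } |i| \leq k\}$, which is open. If $a \in \bigcap_{n \in \ZZ} \sigma^{-n}(U)$, then for each $n$ there is some $b^{(n)} \in \Lambda$ agreeing with $\sigma^n(a)$ on positions $-k, \dots, k$; in particular the block $a_n \cdots a_{n+k-1}$ is a length-$k$ subword of $b^{(n)}$, hence admissible. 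So every length-$k$ subword of $a$ lies in $W$, forcing $a \in M_W = \Lambda$. For the ``only if'' direction, assume $\Lambda$ is locally maximal with witness $U$. Pick $k$ so large that the cylindrical set $V_k := \{a : \exists b \in \Lambda, \, a_i = b_i \text{ for } |i| \leq k\}$ is contained in $U$, and let $W := \{b_{-k} \cdots b_k : b \in \Lambda\}$. Clearly $\Lambda \subseteq M_W$. Conversely, for $a \in M_W$ and any $n$, the block $a_{n-k} \cdots a_{n+k}$ equals some $b^{(n)}_{-k} \cdots b^{(n)}_{k}$ with $b^{(n)} \in \Lambda$, so $\sigma^n(a) \in V_k \subseteq U$. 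Hence $a \in I_\sigma(U) = \Lambda$, proving $M_W \subseteq \Lambda$.

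For part (2), premaximality then follows as a corollary of the construction used in the ``only if'' direction of (1). Given an arbitrary (closed, shift-invariant) $\Lambda \subset \Omega$ and an arbitrary open neighborhood $U$ of $\Lambda$, I would first pick $k$ so large that the cylinder neighborhood $V_k$ of $\Lambda$ is contained in $U$ --- this uses only compactness of $\Lambda$ and the definition of the Tikhonov metric. Then define $W := \{b_{-k} \cdots b_k : b \in \Lambda\}$ and set $\tilde\Lambda := M_W$. By the ``if'' direction of (1), $\tilde\Lambda$ is locally maximal for $\sigma$. By shift-invariance of $\Lambda$ every length-$(2k+1)$ subword of a point of $\Lambda$ lies in $W$, so $\Lambda \subseteq \tilde\Lambda$. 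Finally, the same block-chasing argument as in the ``only if'' direction shows $\sigma^n(a) \in V_k \subseteq U$ for every $a \in \tilde\Lambda$ and every $n$, so in particular $\tilde\Lambda \subseteq U$. This exhibits the required locally maximal intermediate set and establishes premaximality.

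The argument is essentially combinatorial once one translates the metric structure into blocks, and I do not anticipate a genuine obstacle. The only subtle point is ensuring that the chosen $k$ simultaneously forces the cylinder neighborhood into $U$ \emph{and} makes the combinatorial closure $M_W$ no larger than desired; this is handled by compactness of $\Lambda$, which ensures a uniform $k$ works. One should also record that $W$ really is a finite set (since $A$ is finite), so $M_W$ is a genuine SFT; this justifies applying (1) to obtain local maximality of $\tilde\Lambda$.
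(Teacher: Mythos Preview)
Your argument is correct and is the standard cylinder--block proof of this classical fact. There is essentially nothing to compare against: the paper does not supply its own proof of this theorem but simply attributes it to the 1960s literature (citing Alekseev) and moves on. Your write-up would in fact fill that gap nicely; the only cosmetic point is that the implicit hypothesis that $\Lambda$ be closed and $\sigma$-invariant (which you correctly invoke when choosing a uniform $k$ via compactness) is worth stating explicitly, since the theorem as quoted says only ``a set $\Lambda \subset \Omega$.''
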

In \cite{Ano2} it was proved that zero-dimensional hyperbolic sets are topologically conjugated to Bernoulli shifts, which implies the next result.
\begin{thm}
Let $\Lambda$ be a zero-dimensional hyperbolic set of a diffeomorphism $f$.   Then $\Lambda$ is premaximal.
\end{thm}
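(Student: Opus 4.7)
The plan is to transfer the symbolic premaximality of item~(2) of the preceding theorem back to $M$ via the pseudo-orbit coding used in the proof sketch of Theorem~\ref{t.anosov}. By the conjugacy result cited just before the statement (\cite{Ano2}), $f|_\Lambda$ is topologically conjugate to $\sigma|_{\Lambda'}$ for some $\sigma$-invariant closed $\Lambda' \subset \Omega$; denote the conjugating homeomorphism by $h : \Lambda \to \Lambda'$. Fix a neighborhood $U$ of $\Lambda$; I seek a locally maximal hyperbolic $\tilde\Lambda$ with $\Lambda \subset \tilde\Lambda \subset U$. Shrink $U$ so that it lies inside the Shadowing Lemma's neighborhood $U(\Lambda)$ and inside a neighborhood on which the hyperbolic splitting of $\Lambda$ extends continuously, so every closed $f$-invariant subset of $U$ is automatically hyperbolic by Lemma~\ref{openhyp}. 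Choose $\ep > 0$ below half an expansivity constant with $B(\Lambda, 2\ep) \subset U$, and let $\delta > 0$ be supplied by the Shadowing Lemma; this yields the coding map $T : P(\delta) \to M$ satisfying $T \circ \sigma = f \circ T$ and $T(\{f^n(x)\}_n) = x$ for $x \in \Lambda$, as in \eqref{eqS2}--\eqref{eqS3}.

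Next, by item~(2) of the symbolic theorem, for any $\delta' > 0$ there is an integer $k$ and a set $W$ of admissible length-$k$ words with $\Lambda' \subset M_W \subset B(\Lambda', \delta')$. To each $a \in M_W$ associate a pseudo-orbit $\{x_n\}$ in $\Lambda$: for each $n \in \Z$ pick $a^{(n)} \in \Lambda'$ agreeing with $\sigma^n(a)$ on the central window of length $k$ (possible because every length-$k$ subword of $a$ is admissible), and set $x_n = h^{-1}(a^{(n)})$. Since $\sigma(a^{(n)})$ and $a^{(n+1)}$ agree on a window of length $k-1$, uniform continuity of $h^{-1}$ ensures $\{x_n\}$ is a $\delta$-pseudo-orbit in $\Lambda$ for $k$ large, and expansivity makes the shadowing point independent of the choices of $a^{(n)}$. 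Define $\Phi : M_W \to M$ by $\Phi(a) = T(\{x_n\})$, and set $\tilde\Lambda = \Phi(M_W)$. The inclusion $\Lambda \subset \tilde\Lambda$ follows from $\Phi|_{\Lambda'} = h^{-1}$ (the trivial choice $a^{(n)} = \sigma^n(a)$), while $\tilde\Lambda \subset U$ follows from the $\ep$-shadowing bound; closedness and $f$-invariance follow from the intertwining $\Phi \circ \sigma = f \circ \Phi$; hyperbolicity is automatic by the first paragraph.

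The crux is local maximality. I would argue that $\Phi$ is a topological conjugacy from $(M_W, \sigma)$ onto $(\tilde\Lambda, f)$: surjectivity is by definition; continuity and the intertwining are built in; injectivity follows since if $\Phi(a) = \Phi(b) = y$, then $d(x_n^a, x_n^b) < 2\ep$ inside $\Lambda$, so for $\ep$ small relative to the modulus of continuity of $h$ the central entries of $a^{(n)}$ and $b^{(n)}$ coincide, whence $a_n = b_n$ for all $n$. Since $M_W$ is locally maximal in $\Omega$ for $\sigma$ by item~(1) of the symbolic theorem, it possesses local product structure, which is transported by $\Phi$ to local product structure on $\tilde\Lambda$. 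The lemma in Section~\ref{s.background} equating local product structure with local maximality then yields that $\tilde\Lambda$ is locally maximal, completing the proof.

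The main obstacle I anticipate is the final step: verifying that symbolic local product structure really does pass through $\Phi$ to the manifold bracket $[\cdot,\cdot]$. Both brackets are characterized by uniqueness of intersections of small local stable and unstable pieces, so the required check is that $\Phi$ sends symbolic local stable (respectively unstable) cylinders at $a$ into $W^s_{\mathrm{loc}}(\Phi(a))$ (respectively $W^u_{\mathrm{loc}}(\Phi(a))$) in $M$. This compatibility is built into Markov-partition style codings and can be extracted from the construction of $T$ in \cite{Ano5}: two sequences in $M_W$ agreeing on $\{n : n \geq 0\}$ produce pseudo-orbits asymptotically close in forward time, whose shadowing points therefore lie on a common local stable manifold, and symmetrically for unstable sides. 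Once this is established, the two brackets are intertwined by $\Phi$ and local maximality transfers.
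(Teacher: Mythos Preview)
Your approach is correct and is exactly the mechanism the paper has in mind: the paper does not give a self-contained proof here but simply observes that zero-dimensional hyperbolic sets are conjugate to subshifts (citing \cite{Ano2}) and that every subshift is premaximal inside $\Omega$, leaving the transfer back to $M$ to the machinery of Theorem~\ref{t.anosov}. You have unpacked that transfer explicitly via the shadowing map $T$, which is precisely the construction sketched in the paper's review of \cite{Ano5}.

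Two small points. First, your appeal to Lemma~\ref{openhyp} is a mis-citation: that lemma concerns $C^1$ perturbations of the diffeomorphism, not closed invariant subsets of a fixed neighborhood. The fact you need---that any closed $f$-invariant set contained in a sufficiently small neighborhood of $\Lambda$ is itself hyperbolic---is standard and follows from the continuous extension of the splitting (or a cone-field argument), but it is not Lemma~\ref{openhyp}. Second, the ``main obstacle'' you flag is genuine but your resolution is the right one: sequences in $M_W$ that agree for $n\ge 0$ give pseudo-orbits in $\Lambda$ that coincide for $n\ge 0$, so their $T$-images lie on a common local stable manifold by the uniqueness in the Shadowing Lemma, and dually for $n\le 0$; this forces $\Phi([a,b]_\sigma)=[\Phi(a),\Phi(b)]$ and hence local product structure of $\tilde\Lambda$. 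With those two clarifications the argument is complete and matches the paper's intended route.
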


Burns and Gelfert were able to extend the above result to prove that a 1-dimensional hyperbolic set for a flow is premaximal~\cite[Proposition 8]{BG}.  The proof follows an argument provided by Anosov after personal communications.

In \cite{Ano3} Anosov obtain the following sufficient condition for a hyperbolic set to not be premaximal.
\begin{thm}
 Let $\Lambda$ be a hyperbolic set of $f \in C^1$. Assume that there exists a family of exact trajectories $\xi: \mathbb{Z} \times [0, a] \to M$ such that
 \begin{enumerate}
 \item $\xi_{n+1, t} = f(\xi(n, t))$,
 \item $\xi(0, 0) \in \Lambda$,
 \item $\dist(\xi(n, t), \Lambda) \to 0$ uniformly in $t$ as $|n| \to \infty$, and
 \item there exists $t_1 \in [0, a]$ such that $\xi(0, t_1) \notin \Lambda$.
 \end{enumerate}
 Then $\Lambda$ is not premaximal.
\end{thm}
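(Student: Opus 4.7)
I argue by contradiction: assume $\Lambda$ is premaximal, and show that $\xi(0, t_1) \in \Lambda$, contradicting condition~(4). The strategy has two layers. For a fixed, arbitrarily small open neighborhood $U$ of $\Lambda$, premaximality produces a locally maximal hyperbolic set $\tilde{\Lambda}$ with $\Lambda \subset \tilde{\Lambda} \subset U$. I will prove $\xi(0, t) \in \tilde{\Lambda}$ for every $t \in [0, a]$ by a connectedness argument on the parameter interval. Shrinking $U$ down to $\Lambda$ and using that $\Lambda$ is closed then forces $\xi(0, t_1) \in \Lambda$, the desired contradiction.

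Fix an isolating neighborhood $U_0 \subset U$ with $\tilde{\Lambda} = \bigcap_{n \in \ZZ} f^n(U_0)$; such a $U_0$ is obtained by intersecting any isolating neighborhood of $\tilde{\Lambda}$ with $U$. Consider
$$T = \{ t \in [0, a] : \xi(0, t) \in \tilde{\Lambda}\}.$$
Condition~(2) gives $0 \in T$, so $T \neq \emptyset$. Continuity of $t \mapsto \xi(0, t)$ (implicit in the notion of a family, and by~(1) transferring to every $\xi(n, \cdot) = f^n \circ \xi(0, \cdot)$) together with closedness of $\tilde{\Lambda}$ shows that $T$ is closed in $[0, a]$.

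The essential step is openness of $T$. If $t_0 \in T$, then the full orbit $\{\xi(n, t_0)\}_{n \in \ZZ}$ lies in $\tilde{\Lambda} \subset U_0$. By hypothesis~(3) and openness of $U_0 \supset \Lambda$, there exists $N$ such that $\xi(n, t) \in U_0$ for every $|n| \geq N$ and every $t \in [0, a]$, \emph{uniformly in $t$}. For the finitely many indices $|n| < N$, continuity of $\xi(n, \cdot)$ at $t_0$ supplies a common $\delta > 0$ for which $|t' - t_0| < \delta$ implies $\xi(n, t') \in U_0$. Combining these, the entire orbit of $\xi(0, t')$ is trapped in $U_0$, so $\xi(0, t') \in \bigcap_n f^n(U_0) = \tilde{\Lambda}$, i.e.\ $t' \in T$. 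Connectedness of $[0, a]$ now forces $T = [0, a]$, hence $\xi(0, t_1) \in \tilde{\Lambda} \subset U$; since $U$ was an arbitrary neighborhood of the closed set $\Lambda$, we conclude $\xi(0, t_1) \in \Lambda$.

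The only step requiring real care is the openness argument, which must combine two kinds of control: the tail estimate from~(3), uniform in $t$ but valid only for $|n|$ beyond a threshold $N$, and continuity in $t$, which handles the finite block $|n| < N$. The decoupling is clean because $N$ depends only on $U_0$, not on the particular $t_0$, so the two estimates can be chosen independently and then combined. Everything else — existence of an isolating $U_0 \subset U$, closedness of $T$, and the final passage $\bigcap_U \tilde\Lambda_U \subset \Lambda$ via closedness of $\Lambda$ — is routine.
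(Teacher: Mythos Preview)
The paper does not give its own proof of this theorem; it is stated as a result of Anosov and attributed to \cite{Ano3}, with only the remark that the examples of Crovisier and Fisher satisfy the hypotheses. So there is no proof in the paper to compare against.

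Your argument is correct. The connectedness argument on the parameter set $T=\{t:\xi(0,t)\in\tilde\Lambda\}$ is exactly the right mechanism, and your openness step is the heart of the matter: the uniform tail control from~(3) gives a single $N$ (depending only on $U_0$) that handles all $|n|\ge N$ simultaneously for every parameter, while continuity in $t$ disposes of the finitely many remaining indices near any $t_0\in T$. The passage to $\xi(0,t_1)\in\Lambda$ by shrinking $U$ is clean because $\Lambda$ is compact. One small remark: you are right that continuity of $t\mapsto\xi(0,t)$ is not stated explicitly in the hypotheses, but the word ``family'' parametrized by the interval $[0,a]$ is standardly read this way, and without it the statement would be false (one could insert an arbitrary orbit asymptotic to $\Lambda$ at an isolated parameter value). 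Your observation that $N$ is independent of $t_0$ is pleasant but not needed for the logic; the openness argument would go through even with a $t_0$-dependent threshold.
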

Note that examples of Crovisier and Fisher satisfy these conditions.

First we prove an analog of Theorem \ref{t.anosov} for flows.

\begin{thm}\label{thmConj}
Let $\Lambda$ and $\Lambda'$ be hyperbolic sets for vector fields $X$ and $X'$ respectively. Assume that $\Lambda$ and $\Lambda'$ are conjugated. Then there exists neighborhoods $U \subset V$ of $\Lambda$ and $U' \subset V'$ of $\Lambda'$,  numbers $\tau_0, \tau'_0 > 0$, and (not necessarily continuous) maps
\begin{align*}
h_1&: I_X(V) \to M', & \quad h'_1&: I_{X'}(V') \to M
\end{align*}
such that 
\begin{equation}\label{eqthm1}
  h_1|_{\Lambda} = h, \quad h'_1|_{\Lambda'} = h^{-1}.
\end{equation}
Then for any $\ep > 0$ there exists $\delta > 0$ such that if $x_1, x_2 \in I_X(U)$ and $\dist(x_1, x_2) < \delta$, then there exists $|\tau'| < \tau_0$ such that
\begin{equation}\label{eqthm21}
      \dist(h_1(x_1), \phi'_{\tau'}(h_1(x_2)))) < \ep
\end{equation}
      and for any $x'_1, x'_2 \in I_{X'}(U')$ and $\dist(x'_1, x'_2) < \delta$, then there exists $|\tau| < \tau_0$ such that
\begin{equation}\label{eqthm22}
      \dist(h'_1(x_1), \phi_{\tau}(h'_1(x_2)))) < \ep,
\end{equation}
  \begin{equation}\label{eqthm31}
  h_1(O(x)) \subset O'(h_1(x)),  \quad x \in I_X(U),
  \end{equation}
  \begin{equation}\label{eqthm32}
  h'_1(O'(x')) \subset O(h'_1(x')), \quad x' \in I_{X'}(U'),
  \end{equation}
\begin{equation}\label{eqthm41}
  h_1 I_X(U) \subset I_{X'}(V'), \; h'_1 I_{X'}(U') \subset I_X(V), \textrm{ and} 
\end{equation}
\begin{equation}\label{eqthm42}
  I_{X'}(U') \subset h_1 I_X(V), \; I_X(U) \subset h'_1 I_{X'}(V').
\end{equation}
For any $x \in I_X(U)$ and  $x' \in I_{X'}(U')$  there exists $|\tau| < \tau_0$ and $|\tau'|<\tau'_0$ such that
\begin{equation}\label{eqthm5}
      h'_1 \circ h_1(x) = \phi(\tau, x), \quad h_1 \circ h'_1(x') = \phi'(\tau', x').
\end{equation}
\end{thm}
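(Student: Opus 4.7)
The plan is to adapt the pseudotrajectory-shadowing framework sketched earlier for Theorem~\ref{t.anosov} to the flow setting, tracking the time reparameterizations carefully at every stage. First I would fix expansivity constants $a, \tau_0 > 0$ for $X$ on a neighborhood $W$ of $\Lambda$ and $a', \tau'_0$ for $X'$ on a neighborhood $W'$ of $\Lambda'$, and introduce the spaces $P(\delta)$ and $P'(\delta')$ of $\delta$-pseudotrajectories valued in $\Lambda$ and $\Lambda'$ respectively. The shadowing theorem from Section~\ref{s.background} then produces, for each sufficiently small $\ep > 0$, shadowing maps $T: P(\delta) \to M$ and $T': P'(\delta') \to M'$; by expansivity the shadowing point is unique up to an ambient time-shift of size $<\tau_0$, which I would fix by a normalization once and for all.

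Next I would push pseudotrajectories through the conjugacy by setting $H(g)(s) := h(g(\tau_g(s)))$, where $\tau_g$ is the reparameterization satisfying $\tau'_g(s) = \partial_\sigma \beta(h(g(\tau_g(s))), 0)$, so that $H(g)$ is an $X'$-pseudotrajectory. Uniform continuity of $h$, $\alpha$, $\beta$ on the relevant compact sets then produces matching constants $\delta_1(\delta'_1)$ making $H: P(\delta_1) \to P'(\delta'_1)$ well-defined; $H'$ is defined symmetrically. For $V$ a sufficiently thin tubular neighborhood of $\Lambda$ and $x \in I_X(V)$, select $g_x(t) \in \Lambda$ within $\eta$ of $\phi_t(x)$ for every $t$; a direct triangle inequality shows $g_x \in P(\delta_1)$ when $\eta$ is small enough, and I would set
\begin{equation*}
h_1(x) := T'(H(g_x)), \qquad h'_1(x') := T(H'(g'_{x'})).
\end{equation*}
Choosing $U \subset V$ small enough that the composite images stay inside $V'$ (and symmetrically for $U' \subset V'$) yields \eqref{eqthm41}--\eqref{eqthm42}.

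The remaining properties reduce to diagram chases: \eqref{eqthm1} because for $x \in \Lambda$ we may take $g_x = \phi_\cdot(x)$, whose image under $H$ is the exact orbit of $h(x)$ in $\Lambda'$, shadowed by $h(x)$ itself; \eqref{eqthm31}--\eqref{eqthm32} because time-translating $g_x$ leaves its orbit image in $\Lambda$ unchanged and hence leaves $h_1(x)$ on the same $X'$-orbit; \eqref{eqthm21}--\eqref{eqthm22} by continuity of each step of the construction, since nearby $x_i$ produce nearby $g_{x_i}$, nearby $H(g_{x_i})$, and hence shadows lying on trajectories that match within $\ep$ up to a time-shift of size $<\tau_0$; and finally \eqref{eqthm5} by uniqueness of shadowing, since $h'_1 \circ h_1(x)$ shadows a pseudotrajectory that agrees with the $X$-orbit of $x$ up to a small error and so must lie on that very orbit within a time-shift $|\tau| < \tau_0$.

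The main obstacle will be the careful bookkeeping of time reparameterizations: each of $h$, $H$, $T$, and their primed counterparts introduces its own rescaling of time (coming from $\alpha$, $\tau_g$, and the shadowing reparameterizations), and one needs the composite reparameterizations entering \eqref{eqthm21}--\eqref{eqthm22} and \eqref{eqthm5} to remain bounded by $\tau_0$ (respectively $\tau'_0$). This reduces to uniform estimates on $\partial_t \alpha - 1$, $\partial_t \beta - 1$, and their integrals over $\Lambda \times [-1,1]$ and $\Lambda' \times [-1,1]$, which hold by compactness but must be propagated through every composition in the diagram.
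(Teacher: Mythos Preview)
Your proposal follows essentially the same architecture as the paper: build a selection map $S$ (your $x \mapsto g_x$) from $I_X(V)$ into pseudotrajectories in $\Lambda$, push through the conjugacy $h$, and then apply the shadowing map $T'$ to land in $M'$, so that $h_1 = T' \circ (\text{push through }h) \circ S$. The paper normalizes the shadowing point concretely by requiring $T(g) \in L(g(0),\ep)$, a small transverse disk through $g(0)$, whereas you say only ``fix by a normalization once and for all''; the transverse-section choice is what makes Propositions~\ref{propT1}--\ref{propS4} clean, so you should adopt it.

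The one genuine gap is in your handling of the reparameterization. You define $H(g)$ via an ODE $\tau'_g(s) = \partial_\sigma \beta(h(g(\tau_g(s))),0)$ and then assert that the needed estimates on $\partial_t\alpha - 1$, $\partial_t\beta - 1$ ``hold by compactness.'' But the hypotheses give no differentiability for $\alpha$ or $\beta$, and certainly no reason for $\alpha(x,\cdot)$ to be close to the identity reparameterization---the conjugacy is merely topological. So your ODE may not be well-posed, and the bounds you need do not follow from compactness alone. The paper sidesteps this by never reparameterizing explicitly: it applies $h$ directly to $g$, and works throughout with the weaker equivalence relation ``$\delta_1$-rep-close'' (two pseudotrajectories agree up to $\delta_1$ after \emph{some} reparameterization). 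All the key steps---in particular Propositions~\ref{propT2} and \ref{propS3} and the chain of constants in the proof of \eqref{eqthm5}---are phrased in terms of rep-closeness, which is preserved by $h$ and $h^{-1}$ by uniform continuity on $\Lambda$, without any derivative bounds. Replace your ODE-based $H$ by this rep-close framework and the rest of your outline goes through.
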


\begin{rem}
  Note that \eqref{eqthm21}, \eqref{eqthm22} are analogs of continuity of maps $h_1, h'_1$. Relations \eqref{eqthm5} state that $h'_1$ is almost an inverse of $h_1$. Also, we tried to construct continuous invertible maps with similar properties, but failed due to the nonuniqueness of shadowing.
\end{rem}

\begin{proof}
For a point $x \in M$ and $\ep > 0$ such that $X(x) \ne 0$ denote
$$
L(x, \ep) := \cup_{v \in T_x M, \; |v| < \ep \; v \perp X(x)} \exp_x(v).
$$
Note that for any closed invariant nonsingular set $\Lambda$ there exists $\ep, \tau_1 > 0$ and a neighborhood $U$ such that
$
U \subset \cup_{x \in \Lambda} L(x, \ep),
$
 and 
$
L(x, \ep) \cap L(\varphi(t, x), \ep) = \emptyset$ for $ x \in M, t \in (-\tau_1, \tau_1).
$

Let $W, W'$ be the neighborhoods and $\tau_0, \tau'_0 > 0$ be constants from the expansivity property for $\Lambda$ and $\Lambda'$.

For $\delta > 0$ denote by $P(\delta)$ the set of $\delta$-pseudotrajectories $g(t) \in \Lambda$. For any $\tau \in \R$ consider the mapping $\sigma_{\tau} : P(\delta) \to P(\delta)$ defined by the relation
$$
(\sigma_{\tau}g)(t) = g(t+\tau).
$$

Shadowing and expansivity imply that for any $\varepsilon > 0$ there exists $\delta > 0$ such that for any $g \in P(\delta)$ there exists a unique point $x \in L(g(0), \ep)$ and a (not necessarily unique) reparametrisation $\gamma \in \Rep$ such that
\begin{equation}\label{eq4.5}
\dist(g(t), \varphi(\gamma(t), x)) < \ep.
\end{equation}

Choose such a small $\delta$ (for arbitrarily $\ep$) and define a map $T: P(\delta) \to M$ such that $x = T(g)$ is the unique point satisfying \eqref{eq4.5}.

For $\delta_1 > 0$ we say that pseudotrajectories $g_1, g_2 \in P(\delta)$ are $\delta_1$-rep-close if there exists a reparametrisation $\gamma \in \Rep$ such that
$$
\dist(g_1(t), g_2(\gamma(t))) < \delta_1.
$$

We will use the following properties of the map $T$, which are consequences of shadowing and expansivity properties.
\begin{prop}\label{propT1}
For any $g \in P(\delta)$ and $t \in \R$ there exists $t' \in \R$ such that
\begin{equation}\label{eq5.1}
T \circ \sigma_t(g) = \varphi_{t'} \circ T(g).
\end{equation}
\end{prop}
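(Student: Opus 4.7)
The plan is to derive the identity by transporting the shadowing orbit of $g$ along the flow. Let $x := T(g)$ and fix any reparametrisation $\gamma$ with $\dist(g(s), \varphi(\gamma(s), x)) < \varepsilon$ for all $s \in \R$. Setting $t' := \gamma(t)$ and $\tilde\gamma(s) := \gamma(s+t) - \gamma(t)$, one checks that $\tilde\gamma$ is again a reparametrisation (increasing, with $\tilde\gamma(0) = 0$, and whose difference quotients inherit those of $\gamma$). Then for every $s \in \R$,
$$\dist(\sigma_t(g)(s), \varphi(\tilde\gamma(s), \varphi_{t'}(x))) = \dist(g(s+t), \varphi(\gamma(s+t), x)) < \varepsilon,$$
so the point $\varphi_{t'}(T(g))$ shadows $\sigma_t(g)$ in exactly the same sense in which $T(\sigma_t(g))$ does by definition.

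To identify these two points up to a flow shift, I would invoke the expansivity property. By construction there is a reparametrisation $\gamma_1$ with $\dist(\sigma_t(g)(s), \varphi(\gamma_1(s), T(\sigma_t(g)))) < \varepsilon$ for every $s$. Combining this with the previous estimate via the triangle inequality and the substitution $u = \gamma_1(s)$ gives
$$\dist\bigl(\varphi(\tilde\gamma \circ \gamma_1^{-1}(u), \varphi_{t'}(x)), \varphi(u, T(\sigma_t(g)))\bigr) < 2\varepsilon, \quad u \in \R.$$
Choosing $\varepsilon$ small enough at the outset that $2\varepsilon < a$, where $a$ is the expansivity constant of a neighborhood of $\Lambda$ containing the shadowing orbits, the expansivity property forces $\varphi_{t'}(T(g)) = \varphi_{\tau}(T(\sigma_t(g)))$ for some $|\tau| < \tau_0$. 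Therefore $T(\sigma_t(g)) = \varphi_{t' - \tau}(T(g))$, which is the desired identity with $t'' := t' - \tau$.

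The main obstacle is that $\varphi_{t'}(T(g))$ will generally not lie in the local transversal $L(g(t), \varepsilon)$ that singles out $T(\sigma_t(g))$, and moreover the reparametrisation $\gamma$ in the definition of $T$ is not unique, so $t'$ itself is not intrinsically attached to $g$. Expansivity is precisely the tool that absorbs both ambiguities into a single small time translation, converting a shadowing coincidence into the orbit equation claimed in the proposition; this is also the reason the statement allows any $t'$ rather than a specific one determined by $t$. The transition from a shift in pseudotrajectory time $t$ to a shift in flow time $t'$ records the fact that $T$ semiconjugates $\sigma_t$ to the flow only up to reparametrisation.
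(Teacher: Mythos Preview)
Your argument is correct and matches the paper's own approach: the paper does not spell out a proof but simply states that Proposition~\ref{propT1} is a consequence of the shadowing and expansivity properties, which is precisely the two-step mechanism you implement (transporting the shadowing orbit by $\varphi_{t'}$ to obtain a second shadow of $\sigma_t(g)$, then invoking expansivity to place both shadows on the same flow orbit). Your discussion of why the transversal condition $T(\sigma_t(g)) \in L(g(t),\varepsilon)$ forces the extra correction $\tau$ is a nice clarification of a point the paper leaves implicit.
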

\begin{prop}\label{propT2}
For small enough $\delta_1, \delta > 0$ if $g_1, g_2 \in P(\delta)$ are $\delta_1$-rep-close then there exists $|\tau| < \tau_0$ such that
    $$
    T g_2 = \phi(\tau, T g_1).
    $$
\end{prop}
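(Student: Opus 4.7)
The plan is to unpack the definition of $T$ and combine the resulting shadowing estimates with the $\delta_1$-rep-closeness hypothesis, then invoke expansivity to compare the two shadowing orbits. Concretely, set $x_i := T(g_i)$ for $i=1,2$. By the definition of $T$ (via \eqref{eq4.5}), there exist reparametrizations $\gamma_1, \gamma_2 \in \Rep$ such that
\begin{equation*}
\dist(g_i(t), \varphi(\gamma_i(t), x_i)) < \ep, \quad t \in \R, \; i = 1, 2,
\end{equation*}
while by hypothesis there is a reparametrization $\gamma$ with $\dist(g_1(t), g_2(\gamma(t))) < \delta_1$ for all $t$.

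Combining these three estimates via the triangle inequality gives
\begin{equation*}
\dist(\varphi(\gamma_1(t), x_1), \varphi(\gamma_2(\gamma(t)), x_2)) < 2\ep + \delta_1, \quad t \in \R.
\end{equation*}
Substituting $s = \gamma_1(t)$ (a bijection of $\R$ since $\gamma_1$ is an increasing homeomorphism) and defining $\tilde\gamma := \gamma_2 \circ \gamma \circ \gamma_1^{-1}$, this rewrites as
\begin{equation*}
\dist(\varphi(s, x_1), \varphi(\tilde\gamma(s), x_2)) < 2\ep + \delta_1, \quad s \in \R,
\end{equation*}
where $\tilde\gamma$ is itself a reparametrization, being a composition of elements of $\Rep$.

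To finish, choose $\ep, \delta_1$ small enough that $2\ep+\delta_1 < a$, where $a$ is the expansivity constant associated to the neighborhood $W$ of $\Lambda$ on which $X$ has the expansivity property. Choose $\delta$ so that the shadowing estimate \eqref{eq4.5} is available for this $\ep$ and additionally so that the shadowing points $x_1, x_2 = T(g_i)$ lie in $I_X(W)$ (which is possible because $x_i$ are $\ep$-close to $\Lambda$ via the defining pseudo-orbits). The expansivity property then yields $x_2 = \varphi(\tau, x_1)$ for some $|\tau|<\tau_0$, i.e., $T(g_2) = \varphi(\tau, T(g_1))$, as required.

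The main obstacle is really bookkeeping rather than content: one must track how small $\delta$ and $\ep$ are forced to be in order (i) to ensure that $\tilde\gamma$ genuinely lies in $\Rep$ with whatever distortion tolerance expansivity demands, (ii) to keep the shadowing points $x_i$ inside the expansivity neighborhood $W$, and (iii) to push the total error $2\ep + \delta_1$ below $a$. Each requirement contracts the allowable $\delta, \delta_1$, but none introduces any circularity, so the statement follows by selecting parameters in the correct order ($\tau_0, a$ from expansivity first, then $\ep$, then $\delta$ and $\delta_1$).
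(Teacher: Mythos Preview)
Your proposal is correct and matches the paper's approach: the paper does not spell out a proof of this proposition but simply declares it (together with Proposition~\ref{propT1}) to be a consequence of the shadowing and expansivity properties, which is exactly the mechanism you have written out in detail. One minor remark: the expansivity property as formulated in the paper places no distortion bound on the reparametrization $\alpha$, so your bookkeeping concern (i) about $\tilde\gamma$ lying in $\Rep$ with a prescribed tolerance is not actually needed---it suffices that $\tilde\gamma$ is an increasing homeomorphism of $\R$, which it is as a composition of such.
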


For some $\delta_2 > 0$ consider $V = B(\delta_2, \Lambda) \subset W$. For any point $z \in I_X(V)$ and $t \in \R$ let us choose a point $g(t) \in \Lambda$ such that the inclusion $\varphi_t(z) \in L(\delta_2, g(t))$ holds. Note that for small enough $\delta_2$ the map $g(t)$ is a $\delta$-pseudotrajectory. Define map $S: U \to P(\delta)$ as $S(z) := g$. We would like to emphasize that the choice of $g(t)$ is not unique, however for any such choice $T(g) = z$.

We will use the following properties of the map $S$, which are consequences of shadowing and expansivity properties.
\begin{prop}\label{propS1}
\begin{equation}\label{eqS1}
T \circ S = id.
\end{equation}
\end{prop}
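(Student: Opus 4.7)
The plan is to show that $T\circ S$ is the identity by unwinding the definitions and invoking the uniqueness in the construction of $T$. Given $z \in I_X(V)$, by the definition of $S$ the pseudotrajectory $g = S(z) \in P(\delta)$ satisfies $\varphi_t(z) \in L(\delta_2, g(t))$ for every $t \in \R$, which is equivalent to the bound $\dist(g(t), \varphi_t(z)) < \delta_2$. Provided we choose $\delta_2$ small enough that $\delta_2 < \ep$, this inequality says that the exact trajectory $t \mapsto \varphi_t(z)$ already $\ep$-shadows $g$ with the trivial reparametrisation $\gamma(t) = t$.

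Next, I would check that $z$ lies in the correct local transversal: since $\varphi_0(z) = z$ and $z \in L(\delta_2, g(0))$ by the definition of $S$, we have $z \in L(g(0), \delta_2) \subset L(g(0), \ep)$. Thus $z$ is a candidate for the unique point produced by $T(g)$. The definition of $T$ states that $T(g)$ is the unique $x \in L(g(0), \ep)$ for which there exists a reparametrisation $\gamma$ with $\dist(g(t), \varphi(\gamma(t), x)) < \ep$, and this uniqueness is exactly the content of the expansivity property on $W$ (taking $\delta_2$ small enough that everything stays in $W$). Hence $T(g) = z$, i.e., $T(S(z)) = z$.

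The only delicate point is the preliminary choice of constants: we need $\delta_2 > 0$ chosen small enough that simultaneously (i) the transversal sections $L(g(t),\delta_2)$ are well-defined and pairwise disjoint along flow segments of length less than $\tau_1$, (ii) the sampling $g(t) \in \Lambda$ produces a genuine $\delta$-pseudotrajectory in $\Lambda$ (so that $S$ lands in $P(\delta)$), (iii) $\delta_2 < \ep$ so that the exact trajectory of $z$ qualifies as an $\ep$-shadow of $g$, and (iv) the tube $B(\delta_2,\Lambda)$ lies inside the expansivity neighborhood $W$ so that the shadowing point is unique. These are all finitely many open conditions on $\delta_2$, so a common small value exists.

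I do not expect any serious obstacle here: the statement is essentially a tautology once one observes that $S$ was defined precisely by choosing, at each time $t$, a point of $\Lambda$ close to $\varphi_t(z)$, and $T$ then recovers the unique shadowing point on the appropriate transversal. The proof is a one-paragraph bookkeeping of the constants, with the expansivity-based uniqueness clause supplying the only nontrivial ingredient.
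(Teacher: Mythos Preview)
Your argument is correct and matches the paper's (implicit) approach: the paper does not actually prove this proposition, merely remarking right after the definition of $S$ that ``for any such choice $T(g)=z$'' and listing the statement among consequences of shadowing and expansivity. Your unwinding of the definitions---$z\in L(g(0),\delta_2)\subset L(g(0),\ep)$ together with the trivial reparametrisation witnessing that $z$ itself $\ep$-shadows $g=S(z)$, then invoking the expansivity-based uniqueness in the definition of $T$---is exactly the intended one-line verification, and your bookkeeping of the constants is appropriate.
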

\begin{prop}\label{propS2}
For any $\ep > 0$ there exists $T > 0$ and $\delta_1$ such that if $g_1, g_2 \in P(\delta)$ and
$$
\dist(g_1(t), g_2(t)) < \delta_1, \quad |t| < T
$$
then there exists $|\tau| < \tau_0$
$$
\dist(S(g_1), \phi(\tau, S(g_2))) < \ep.
$$
\end{prop}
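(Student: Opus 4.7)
The plan is to prove the proposition by a compactness and contradiction argument. I interpret the intended conclusion as
\[
\dist(T(g_1), \varphi(\tau, T(g_2))) < \ep,
\]
since $S$ is defined only on $U \subset M$ while $g_1, g_2 \in P(\delta)$; the $S$'s in the statement therefore appear to be typos for $T$. I also assume that the shadowing tolerance $\ep_0$ entering \eqref{eq4.5} was fixed smaller than $a/2$, where $a$ is the expansivity constant on $W$; this is a one-time global choice in the construction of $T$. Now suppose the proposition fails: there exist $\ep > 0$, sequences $T_n \to \infty$, $\delta_{1,n} \searrow 0$, and pseudotrajectories $g_1^{(n)}, g_2^{(n)} \in P(\delta)$ with $\dist(g_1^{(n)}(t), g_2^{(n)}(t)) < \delta_{1,n}$ for $|t| < T_n$, yet
\[
\dist\bigl(T(g_1^{(n)}),\, \varphi_\tau(T(g_2^{(n)}))\bigr) \geq \ep \quad\text{for every } |\tau| < \tau_0.
\]

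Next I extract limits. Because $\Lambda$ is compact and the pseudotrajectory condition forces a uniform modulus of continuity on each $g_i^{(n)}$, Arzel\`a--Ascoli supplies a subsequence for which both $g_i^{(n)}$ converge uniformly on compact subsets of $\R$. Since $\delta_{1,n} \to 0$ while $(-T_n, T_n)$ exhausts $\R$, the two limits coincide in a common $g^* \in P(\delta)$. Refining further, $T(g_i^{(n)}) \to y_i \in \overline{W}$ for $i=1,2$. Let $\gamma_i^{(n)}$ be reparametrizations realizing \eqref{eq4.5} for $g_i^{(n)}$ and $T(g_i^{(n)})$; the uniform slope bound inherent to $\Rep$, together with the normalization $\gamma(0)=0$, makes the $\gamma_i^{(n)}$ equicontinuous on bounded intervals, so a diagonal extraction yields $\gamma_i^{(n)} \to \gamma_i^*$ uniformly on compacts with $\gamma_i^* \in \Rep$. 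Passing to the limit in \eqref{eq4.5} gives
\[
\dist(g^*(t), \varphi(\gamma_i^*(t), y_i)) \leq \ep_0, \qquad t \in \R,\ i = 1,2.
\]

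To conclude, set $\alpha := \gamma_2^* \circ (\gamma_1^*)^{-1}$; the triangle inequality gives $\dist(\varphi(s, y_1), \varphi(\alpha(s), y_2)) \leq 2\ep_0 < a$ for every $s \in \R$, so expansivity forces $y_2 = \varphi(\tau, y_1)$ with $|\tau| < \tau_0$. Then, for $n$ large, continuity of $\varphi$ and the convergences $T(g_i^{(n)}) \to y_i$ yield $\dist(T(g_1^{(n)}), \varphi_{-\tau}(T(g_2^{(n)}))) \to 0$, contradicting the standing lower bound with $\tau' := -\tau \in (-\tau_0, \tau_0)$. The main obstacle is the non-uniqueness of the reparametrizations supplied by \eqref{eq4.5}, which blocks a naive continuous-dependence argument for $T$: it is handled by extracting a convergent subsequence of the $\gamma_i^{(n)}$ themselves via the uniform slope bound on $\Rep$, after which expansivity absorbs the residual time ambiguity into a shift of size less than $\tau_0$.
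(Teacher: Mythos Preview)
The paper does not actually supply a proof of this proposition: it simply lists Propositions~\ref{propT1}--\ref{propS4} and asserts that they ``are consequences of shadowing and expansivity properties.'' Your compactness/contradiction argument is a correct way to make this precise, and your reading of the statement (that $S(g_i)$ should be $T(g_i)$, since $S$ maps $U\subset M$ into $P(\delta)$ and not the other way around) is the only sensible one.

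One small technical point: your invocation of Arzel\`a--Ascoli for the $g_i^{(n)}$ is not quite justified, because $\delta$-pseudotrajectories need not be continuous---the defining inequality $d(g(t+\tau),\phi_\tau(g(t)))<\delta$ only bounds jumps by roughly $\delta$, not by something vanishing as $\tau\to 0$. This does not damage the argument: since $\Lambda$ is compact, Tychonoff's theorem already gives a subsequence converging pointwise, and pointwise convergence at each fixed $t$ is all you use when passing to the limit in \eqref{eq4.5}. The equicontinuity step for the reparametrizations $\gamma_i^{(n)}$ is genuine (the slope bound in $\Rep$ gives a uniform Lipschitz constant, and one may normalize so that $\gamma_i^{(n)}(0)$ stays bounded because $T(g_i^{(n)})\in L(g_i^{(n)}(0),\ep_0)$), so the rest of the argument goes through as written.
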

\begin{prop}\label{propS3}
For any $\delta_1 > 0$ there exists $\delta > 0$ such that for any $g \in P(\delta)$ pseudotrajectories $g$ and $S \circ T g$ are $\delta_1$-rep-close.
\end{prop}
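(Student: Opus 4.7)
The plan is to use the orbit of $x := T g$ as a bridge, and to compare $g(t)$ with $(S\circ T)(g)$ at a suitably reparametrized time through this orbit via the triangle inequality. Given $\delta_1 > 0$, I first fix two auxiliary constants: $\ep > 0$, the shadowing tolerance appearing in the defining property of $T$, and $\delta_2 > 0$, the radius controlling the normal slices $L(\cdot, \delta_2)$ used in the definition of $S$. These are chosen so that $\ep + \delta_2 < \delta_1$. Then the shadowing/expansivity input supplies a $\delta > 0$ such that every $g \in P(\delta)$ has a well-defined image $Tg$ via \eqref{eq4.5}, and shrinking $\delta$ further if needed we may also arrange that $S(Tg) \in P(\delta)$.

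Now fix $g \in P(\delta)$, set $x := T g$ and $g' := S(x)$, and let $\gamma \in \Rep$ be the reparametrization from the defining property of $T$, so that
$$ \dist(g(t), \varphi(\gamma(t), x)) < \ep \quad \text{for every } t \in \R. $$
By construction of $S$, for every $s \in \R$ the point $g'(s) \in \Lambda$ is chosen so that $\varphi_s(x) \in L(g'(s), \delta_2)$. Since $L(y, \delta_2)$ is the image under $\exp_y$ of a perpendicular $\delta_2$-ball, we obtain
$$ \dist(g'(s), \varphi_s(x)) < \delta_2 \quad \text{for every } s \in \R. $$
Specializing the latter to $s = \gamma(t)$ and combining with the former via the triangle inequality yields
$$ \dist(g(t), g'(\gamma(t))) \leq \dist(g(t), \varphi(\gamma(t), x)) + \dist(\varphi(\gamma(t), x), g'(\gamma(t))) < \ep + \delta_2 < \delta_1, $$
so $\gamma$ itself witnesses that $g$ and $S(Tg)$ are $\delta_1$-rep-close.

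The only point that needs care is verifying that the reparametrization supplied by $T$ actually lies in the class $\Rep$ required by the definition of rep-closeness. This is automatic: $T$ is constructed via the flow shadowing lemma, which produces reparametrizations in $\Rep$ by the definition recalled at the beginning of Section~\ref{s.background}. The rest is just bookkeeping on the constants $\ep$ and $\delta_2$. No real obstacle is expected here, since Proposition~\ref{propS3} is morally just the statement that $S$ is a right inverse of $T$ up to the intrinsic normal-slice fuzziness of size $\delta_2$, which by our choice of parameters can be made as small as we wish.
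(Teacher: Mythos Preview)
The paper does not actually supply a proof of Proposition~\ref{propS3}; it is simply listed among the ``properties of the map $S$, which are consequences of shadowing and expansivity properties,'' with no further detail. Your argument---bridging $g$ and $S\circ T g$ through the true orbit $\varphi(\gamma(t),x)$ of $x=Tg$ and applying the triangle inequality---is exactly the natural way to unpack that assertion, and it is correct.

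One small remark on bookkeeping: for $S(Tg)$ to be defined you need the orbit of $x=Tg$ to lie in $V=B(\delta_2,\Lambda)$, which follows from \eqref{eq4.5} once you additionally impose $\ep\le\delta_2$ (so choose, say, $\ep<\delta_1/3$ and $\delta_2\in[\ep,\delta_1/3)$, then take $\delta$ from the shadowing lemma and, if necessary, shrink $\delta_2$ further so that $S$ lands in $P(\delta)$). Since the paper treats both $\ep$ and $\delta_2$ as freely adjustable parameters (``for arbitrarily $\ep$'', ``for some $\delta_2>0$''), this ordering of choices is consistent with the setup and causes no difficulty.
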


\begin{prop}\label{propS4}
For any $\delta_4 > 0$ there exists neighborhood $U$ of $\Lambda$ such that for any $x \in I_X(W)$ the inclusion $S(x) \in P(\delta_4)$ holds.
\end{prop}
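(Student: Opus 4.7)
The plan is to obtain Proposition \ref{propS4} by simply shrinking the neighborhood $U = B(\eta, \Lambda)$ of $\Lambda$ until $\eta$ is small compared to $\delta_4$, and then checking that the pseudotrajectory $S(x)$ constructed in the definition of $S$ has jumps of size at most $\delta_4$. First I would recall that for $x \in I_X(U)$ the map $g = S(x)$ was defined by choosing $g(t) \in \Lambda$ with $\varphi_t(x) \in L(g(t), \eta)$ for every $t \in \mathbb{R}$, so in particular one has the pointwise bound $\dist(\varphi_t(x), g(t)) < \eta$ uniformly in $t$.

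To verify $g \in P(\delta_4)$, I need to estimate $\dist(g(t+\tau), \varphi_\tau(g(t)))$ for $|\tau| < 1$. The natural triangle inequality splits this as
\begin{equation*}
\dist(g(t+\tau), \varphi_\tau(g(t))) \leq \dist(g(t+\tau), \varphi_{t+\tau}(x)) + \dist(\varphi_\tau(\varphi_t(x)), \varphi_\tau(g(t))),
\end{equation*}
the first summand being smaller than $\eta$ by the $L$-section bound, and the second being controlled by a uniform Lipschitz constant $K$ for the time-$\tau$ maps $\varphi_\tau$, $|\tau|\le 1$, on a fixed compact neighborhood of $\Lambda$ contained in $W$. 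This yields $\dist(g(t+\tau), \varphi_\tau(g(t))) < (1+K)\eta$, so fixing $\eta := \delta_4/(1+K)$ (and further shrinking if needed so that $U \subset V$ and $S$ is well-defined with $g(t)\in\Lambda$) produces a neighborhood of $\Lambda$ for which every $x \in I_X(U)$ satisfies $S(x) \in P(\delta_4)$.

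There is no genuine obstacle here: the entire argument is a continuity of flow argument, and the only point demanding care is the uniformity of the Lipschitz constant $K$ in $t$. This uniformity follows from compactness of a closed tubular neighborhood of $\Lambda$ inside $W$ together with smoothness of $X$, exactly as in the standard Grönwall-type estimate for flows. Once this is in place, the proposition reduces to the single triangle-inequality computation above, and the choice of $U$ depends only on $\delta_4$ and the fixed flow.
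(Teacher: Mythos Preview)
Your argument is correct and is essentially the intended one. The paper does not give a proof of Proposition~\ref{propS4} at all; it merely lists Propositions~\ref{propS1}--\ref{propS4} together and remarks that they ``are consequences of shadowing and expansivity properties.'' Your triangle-inequality computation with the uniform Lipschitz bound on $\varphi_\tau$ for $|\tau|\le 1$ is exactly the elementary continuity argument that underlies this statement, and in fact you correctly observe that neither shadowing nor expansivity is actually needed here---only smoothness of the flow on a compact neighborhood of $\Lambda$.

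Two cosmetic remarks. First, the paper's statement reads ``$x\in I_X(W)$,'' which is almost certainly a typo for ``$x\in I_X(U)$'' (otherwise the conclusion would not depend on $U$); you have tacitly read it this way, which is right. Second, the section $L(y,\eta)$ is defined via the exponential map on vectors of norm less than $\eta$, so the estimate $\dist(\varphi_t(x),g(t))<\eta$ is literally true only up to the distortion of $\exp$; on a compact neighborhood of $\Lambda$ this distortion is uniformly bounded, so it is absorbed into the constant $K$ and does not affect the argument.
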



Define similarly $\delta', \delta'_1 > 0$ and maps $T': P(\delta') \to B(\ep', \Lambda')$, $S': B(\delta', \Lambda') \to P(\delta')$.

Now we construct the maps $h_1, h'_1$.
First we will choose neighborhoods $V, V'$ and construct maps $h_1, h'_1$. After this we will choose neighborhoods $U, U'$. Note that we can always decrease $U, U'$.

Let us choose $\delta_2> 0$ such that for any $g \in P(\delta_2)$ the inclusion $h(g) \in P(\delta')$ holds. Now according to Proposition \ref{propS4} let us choose $V$ such that for any $x \in I_X(V)$ the inclusion $S(x) \in P(\delta_2)$ holds. Define the map $h_1: I_X(V) \to M'$ as $h_1:= T' \circ h \circ S$.
Similarly, define $V'$ and map $h'_1: I_{X'}(V') \to M$ as $h'_1 = T \circ h^{-1} \circ S'$.

Among the properties of maps $h_1$, $h'_1$ the most difficult is relation \eqref{eqthm5}. We will give its proof in full details.

Relations \eqref{eqthm1}--\eqref{eqthm42} can be easily deduced (decreasing $U$ and $U'$ if necessarily) from Propositions \ref{propT1}--\ref{propS4}, expansivity of the vector fields in $V, V'$ and continuity of $h$, $h^{-1}$.




We prove only the second equality \eqref{eqthm5}. Note that 
$$h'_1 \circ h_1 = T \circ h^{-1} \circ S' \circ T' \circ h \circ S.$$

Let us choose $\delta'_3 > 0$ such that if $g'_1$ and $g'_2$ are $\delta'_3$-rep-close then $h^{-1} g'_1$, $h^{-1} g'_2$ are $\delta_1$-rep-close.

By Proposition \ref{propS3} let us choose $\delta'_4 > 0$ such that for $g'_1 \in P(\delta'_4)$ pseudotrajectories $g'_1$ and $S' \circ T' g'_1$ are $\delta'_3$-rep-close.

By Proposition \ref{propS4} and the continuity of $h$ we can choose  a neighborhood $U \subset V$ such that for any $x_1 \in I_X(U)$ the inclusion $h \circ S x_1 \in P(\delta'_4)$ holds.

Now we are ready to prove the second equality \eqref{eqthm5}. Let $x_1 \in U $. Set
$$
g_1 := S x_1, \; g'_1 := hg_1,\; x' := T'g'_1, \; g'_2 := S'x', \; g_2 := h^{-1}g'_2, \; x_2 := Tg_2.
$$
By construction of $U$ the inclusion $$
g'_1 \in P(\delta'_4)
$$
holds. Note that
$$
g'_2 = S' \circ T' g'_1,
$$
hence, $g'_1$ and $g'_2$ are $\delta'_3$-rep-close. Note that
$$
g_1 = h^{-1} g'_1, \textrm{ and } g_2 = h^{-1} g'_2.
$$
Hence, $g_1$ and $g_2$ are $\delta_1$-rep-close. Finally, we have
$$
x_1 = T g_1, \quad x_2 = T g_2.
$$
Proposition \ref{propT2} implies the second equality \eqref{eqthm5}.
\end{proof}

\begin{proof}[Proof of Theorem~\ref{thmPremaxFlow}]
We prove that if $\Lambda$ is premaximal then $\Lambda'$ is premaximal. The converse statement can be proven similarly.

Consider neighborhoods $U \subset V$ of $\Lambda$ and $U' \subset V'$ of $\Lambda'$ and maps $h_1, h'_1$ from Theorem \ref{thmConj}.

Assume that $\Sigma \subset U$ is a locally maximal set with isolating neighborhood $W = B(\eta, \Sigma)$, such that $\Sigma' = O(h_1(\Sigma)) \subset U'$. Below we will prove that $\Sigma'$ is locally maximal.

Consider $\eta_1 > 0$ such that for $x \in B(\eta_1, \Sigma)$ and $|\tau| < \tau_0$ the inclusion $\phi_{\tau}(x)$ holds.

Using equality \eqref{eqthm32} and inequality \eqref{eqthm22} for $\ep = \eta_1$ we find $\eta'_2 > 0$ such that if $x' \in W' := B(\eta'_3, \Sigma')$ then $h'_1(x') \in B(\eta_1, \Sigma)$.

Let us prove that $\Sigma' = I_{X'}W'$. Assume contrarily, then there exists $x' \in W' \setminus \Sigma'$ such that $O(x') \subset W'$.
Relations \eqref{eqthm31}, \eqref{eqthm32}, \eqref{eqthm5} imply that
\begin{equation}\label{eqtauincl}
O(h(x')) = \bigcup_{|\tau|< \tau_0, \; x'_1 \in O(x')} \phi_{\tau}(h'_1(x'_1)).
\end{equation}
Note that due to choice of $\eta'_2, \eta_1$ the inclusion $\phi_{\tau}(h'_1(x'_1)) \in W$ holds. Hence $O(h(x')) \subset W$. Local maximality of $\Sigma$ implies that $h(x') \in \Sigma$ and $x' \in \Sigma'$, which leads to a contradiction.

Now we are ready to prove premaximality of $\Lambda'$. Since $\Lambda$ is premaximal there exists sequence $\Lambda_n \subset B(\ep_n, \Lambda) \subset U$ of locally maximal sets, where $\ep_n \to 0$. Arguing similarly to \eqref{eqtauincl} it is easy to show that sets $\Lambda'_n = O(h_1(\Lambda_n))$ satisfies the inclusion $\Lambda'_n \subset B(\ep'_n, \Lambda')$ where $\ep'_n \to 0$ and for large enough $n$ sets $\Lambda'_n$ are locally maximal. Hence $\Lambda'$ is premaximal.
\end{proof}

\subsection{Proof of Theorem~\ref{t.shadowing}}

We now proceed to the proof that a hyperbolic set is premaximal if and only if the shadowing closure stabilizes.
The next statement is not hard to prove, but is essential for our arguments and shows that if a sequence of shadowing closures stabilizes, then it does so at a locally maximal set.

As was claimed in \cite{Ano3} $\Lambda$ is locally maximal if and only if $\Lambda$ has internal shadowing (for sufficiently small $\delta > 0$ any $\delta$-pseudotrajectory consisting of points from $\Lambda$ can be shadowed by a trajectory from $\Lambda$ for small enough $\delta$). It easily leads us to the following.
\begin{prop}\label{p.stabilize}
If $\mathrm{sh}(\Lambda, \delta)=\Lambda$ for some $\delta>0$. Then $\Lambda$ is locally maximal.
\end{prop}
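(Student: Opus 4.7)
The plan is to exploit the internal-shadowing characterization of local maximality quoted immediately above the statement: $\Lambda$ is locally maximal if and only if, for all sufficiently small $\delta > 0$, every $\delta$-pseudotrajectory lying in $\Lambda$ is shadowed by a genuine $f$-orbit that also lies in $\Lambda$. Granting this characterization, the proposition should reduce to unpacking the definition of $\mathrm{sh}(\Lambda, \delta)$.

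First I would pick an arbitrary $\delta$-pseudo orbit $\{y_n\}_{n \in \mathbb{Z}} \subset \Lambda$ (shrinking $\delta$ first if necessary so that $\delta < \delta_0$ and the internal shadowing characterization applies). The shadowing lemma provides a point $x \in M$ that $a/2$-shadows $\{y_n\}$, and expansivity on a neighborhood of $\Lambda$ makes $x$ unique. By definition, $x$ lies in the (unclosed) set whose closure is $\mathrm{sh}(\Lambda, \delta)$, so $x \in \mathrm{sh}(\Lambda, \delta) = \Lambda$. In other words, every $\delta$-pseudo orbit in $\Lambda$ is shadowed by a trajectory inside $\Lambda$, giving internal shadowing, and Anosov's characterization then yields local maximality of $\Lambda$.

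If one wishes to avoid quoting the characterization, essentially the same argument can be packaged directly. Choose $\eta > 0$ with $\eta < a/2$ and $(L+1)\eta < \delta$, where $L$ is a Lipschitz constant for $f$ near $\Lambda$, and set $U := B(\Lambda, \eta)$. For any $x \in I_f(U)$ one chooses $y_n \in \Lambda$ with $d(f^n(x), y_n) < \eta$; the triangle inequality shows $\{y_n\}$ is a $\delta$-pseudo orbit in $\Lambda$, and $x$ automatically $\eta$-shadows it and hence $a/2$-shadows it. This forces $x \in \mathrm{sh}(\Lambda, \delta) = \Lambda$, whence $I_f(U) \subset \Lambda$. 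The reverse inclusion is immediate from the $f$-invariance of $\Lambda$ together with $\Lambda \subset U$.

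There is no real obstacle here; the only minor care needed is in juggling constants so that the constructed sequence $\{y_n\}$ is genuinely a $\delta$-pseudo orbit and the shadowing is tight enough (below $a/2$) to invoke expansivity and identify $x$ as \emph{the} shadowing point. Everything else is bookkeeping against the definitions of shadowing closure and local maximality.
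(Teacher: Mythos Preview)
Your proposal is correct and matches the paper's approach exactly: the paper does not spell out a proof at all but simply states that the internal-shadowing characterization from \cite{Ano3} ``easily leads us to'' the proposition, and your first paragraph is precisely the one-line deduction the authors have in mind. Your second, self-contained argument via $I_f(B(\Lambda,\eta))\subset\Lambda$ is a welcome bonus that the paper omits.
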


As a reminder if $\Lambda$ is a locally maximal set, then for $\varepsilon>0$ sufficiently small we know that $B_{\varepsilon}(\Lambda)$ is an isolating set for $\Lambda$.

In~\cite{Cro} Crovisier provides an example of a hyperbolic set that is never included in a locally maximal one, and this example shows that to establish premaximality it is not enough to say that a hyperbolic set is included in a locally maximal one or that the shadowing closure stabilizes for some $\delta>0$.

To be more precise, let Let $A $ and $B$ be $2\times 2$ hyperbolic toral automorphisms such that
$A$ has two fixed points $p$ and $q$ and the hyperbolicity in $A$ dominates the hyperbolicity in $B$. Let $F$ be a diffeomorphism on the 4-torus defined by $F_0(x,y)=(Ax, By)$ and fix $r$ a fixed point of $B$. Crovisier proves that if we let $V$ be a sufficiently small neighborhood of $(q,r)$, then the set
$$\Lambda=\bigcap_{n\in\mathbb{Z}} f^n(\mathbb{T}^4-V)$$
is a hyperbolic set and the only locally maximal hyperbolic set that $\Lambda$ can be included in is the entire manifold $\mathbb{T}^4$; so this example is not premaximal. However, $\Lambda$ is included in a locally maximal hyperbolic set and for $\delta>0$ sufficiently large may be in a shadowing closure that stabilizes.

\begin{proof}[Proof of Theorem~\ref{t.shadowing}]
Suppose that $\Lambda$ is a hyperbolic set for a diffeomorphism $f:M\rightarrow M$ and suppose that for any neighborhood $U$ of $\Lambda$ there exists a $\delta>0$ such that the shadowing closure of $\Lambda$ stabilizes inside $U$. Now the previous proposition shows that the stabilizer is a locally maximal set and $\Lambda$ is premaximal.

To prove the other implication let $f:M\rightarrow M$ be a diffeomorphism and
let $\Lambda$ be a premaximal set for $f$ and $V$ be a neighborhood of $\Lambda$. Then there exists some $\eta>0$ such that
$$
B_{\eta}(\Lambda)=\bigcup_{x\in\Lambda} B_{\eta}(x)\subset V
$$
and $\Lambda_{\eta} = I_f(B_{\eta}(\Lambda))$ is hyperbolic.


Let $\tilde{\Lambda}$ be a locally maximal hyperbolic set such that
$$
\Lambda\subset \tilde{\Lambda}\subset \Lambda_{\eta}\subset B_{\eta}(\Lambda)\subset V.
$$ Fix $\varepsilon\in(0,\eta)$ such that $V_{\varepsilon}(\tilde{\Lambda})$ is an isolating neighborhood of $\tilde{\Lambda}$ and fix $\delta\in (0,\varepsilon/2)$ such that every $\delta$-pseudo orbit in $\tilde{\Lambda}$ is $\varepsilon$ shadowed in $\tilde{\Lambda}$ by a unique point in $\tilde{\Lambda}$.

From the choice of constants above we know that $\Lambda\subset \mathrm{sh}(\Lambda, \delta)\subset \Lambda_{\varepsilon}$. Furthermore, we know that each $\delta$-pseudo orbit of $\Lambda$ is a $\delta$-pseudo orbit of $\tilde{\Lambda}$ so there exists a unique point $y\in\tilde{\Lambda}$ that is a $\varepsilon$-shadowing point of the pseudo orbit. Hence, $\mathrm{sh}(\Lambda, \delta)\subset \tilde{\Lambda}$. Let $\Lambda_1=\mathrm{sh}(\Lambda, \delta)$.

If $\Lambda_1=\Lambda$ we know from the previous proposition that $\Lambda$ is locally maximal. So suppose that $\Lambda_1\neq \Lambda$ and let $\nu_1=d_H(\Lambda_1, \Lambda)$ where $d_H(\cdot, \cdot)$ is the Hausdorff distance between the sets. More generally, let $\Lambda_{j+1}=\mathrm{sh}(\Lambda_j, \delta)$, $\nu_{j+1}=d_H(\Lambda_{j+1}, \Lambda_j)$ for $j\in\mathbb{N}$. By the shadowing estimates we know that $\nu_j\in(0, \varepsilon)$.

Fix $\gamma\in(0, \delta/4)$ such that for all $x,y\in M$ if $d(x,y)<\gamma$, then
$$d(f(x), f(y))<\delta/4\textrm{ and }d(f^{-1}(x), f^{-1}(y))<\delta/4.$$

 \begin{claim}
 For all $j\in\mathbb{N}$ if $\Lambda_j\neq \Lambda_{j+1}$ and $\Lambda_{j+1}\neq \Lambda_{j+2}$, then either $\nu_j\geq \gamma$ or $\nu_{j+1}\geq \gamma$.

 \end{claim}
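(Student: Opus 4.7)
Plan: My approach is by contrapositive — assume $\nu_j<\gamma$ and $\nu_{j+1}<\gamma$, and deduce that either $\Lambda_j=\Lambda_{j+1}$ or $\Lambda_{j+1}=\Lambda_{j+2}$. I aim for the second equality; since the monotone inclusion $\Lambda_{j+1}\subseteq\Lambda_{j+2}$ is automatic (every point of $\Lambda_{j+1}$ trivially shadows its own orbit), it is enough to start with a point $z$ that $\varepsilon$-shadows some $\delta$-pseudo orbit $\{x_n\}\subset\Lambda_{j+1}$ and show $z\in\Lambda_{j+1}$.

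The key construction is to push the pseudo orbit down into $\Lambda_j$. The hypothesis $\nu_{j+1}<\gamma$ gives $\Lambda_{j+1}\subset B_\gamma(\Lambda_j)$, so for each $n$ I pick $x'_n\in\Lambda_j$ with $d(x_n,x'_n)<\gamma$. The defining property of $\gamma$ controls $d(f(\cdot),f(\cdot))$ and $d(f^{-1}(\cdot),f^{-1}(\cdot))$ by $\delta/4$ on $\gamma$-close pairs, so the triangle inequality produces
\[
d(f(x'_n),x'_{n+1})\le d(f(x'_n),f(x_n))+d(f(x_n),x_{n+1})+d(x_{n+1},x'_{n+1})<\tfrac{\delta}{4}+\delta+\gamma,
\]
with the analogous estimate for $f^{-1}$. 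Thus $\{x'_n\}$ is a pseudo orbit in $\Lambda_j$ with constant slightly above $\delta$, and the orbit of $z$ stays within $\varepsilon+\gamma$ of it.

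I then invoke expansivity and the shadowing lemma inside the ambient locally maximal set $\tilde\Lambda$ from the outer proof. Because $\delta$ was chosen well below the shadowing threshold for $\tilde\Lambda$, the slightly-bigger pseudo orbit $\{x'_n\}\subset\Lambda_j\subseteq\tilde\Lambda$ is still $\varepsilon$-shadowed by a unique point; by that uniqueness and the pointwise $\gamma$-closeness of $\{x_n\}$ and $\{x'_n\}$, this shadowing point must coincide with $z$. At this stage I bring in the second hypothesis $\nu_j<\gamma$: each $x'_n\in\Lambda_j$ lies within $\gamma$ of $\Lambda_{j-1}$, and this additional slack, combined with density of shadowing points in the definition of $\Lambda_j$, lets me perturb $\{x'_n\}$ inside $\Lambda_j$ to an exact $\delta$-pseudo orbit whose unique shadowing point is still $z$ by expansivity. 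This exhibits $z$ as a member of the defining set of $\Lambda_{j+1}$, hence $z\in\Lambda_{j+1}$ and $\Lambda_{j+2}=\Lambda_{j+1}$, contradicting the original hypothesis.

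The main obstacle I expect is precisely the gap between the natural pseudo-orbit constant $\tfrac{5\delta}{4}+\gamma$ yielded by one projection and the exact constant $\delta$ demanded in the definition of the shadowing closure. The choice $\gamma<\delta/4$ keeps the blow-up modest but does not by itself shrink the constant back to $\delta$; the second smallness condition $\nu_j<\gamma$ is what provides the extra maneuver room to tighten the projected pseudo orbit down to a genuine $\delta$-pseudo orbit in $\Lambda_j$. Without both hypotheses, the argument can only place $z$ in a slightly enlarged shadowing closure rather than in $\Lambda_{j+1}$ itself, which is the delicate constant-juggling step that needs careful bookkeeping against the threshold of the shadowing lemma for $\tilde\Lambda$.
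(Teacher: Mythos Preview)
Your proposal has a genuine gap at exactly the place you flag as the ``main obstacle.'' You correctly compute that projecting a $\delta$-pseudo orbit $\{x_n\}\subset\Lambda_{j+1}$ down to $\{x'_n\}\subset\Lambda_j$ yields a pseudo orbit with constant about $\tfrac{5\delta}{4}+\gamma>\delta$. You then assert that the second hypothesis $\nu_j<\gamma$ lets you ``tighten'' $\{x'_n\}$ back to a genuine $\delta$-pseudo orbit in $\Lambda_j$. This step is not justified: knowing that each $x'_n$ lies within $\gamma$ of $\Lambda_{j-1}$ gives no control over the consecutive-step errors $d(f(x'_n),x'_{n+1})$ inside $\Lambda_j$, and the appeal to ``density of shadowing points'' is too vague to supply a construction. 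As written, you have only placed $z$ in $\mathrm{sh}(\Lambda_j,\tfrac{3}{2}\delta)$, not in $\mathrm{sh}(\Lambda_j,\delta)=\Lambda_{j+1}$, and there is no mechanism on offer to close that gap.

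The paper avoids this difficulty by a different organization: it does not start from the defining pseudo orbit of a shadowing point. Instead it fixes $y\in\Lambda_{j+2}\setminus\Lambda_{j+1}$ and projects the \emph{exact orbit} $\{f^n(y)\}$ in two stages, first to points $y_n\in\Lambda_{j+1}$ and then to points $x_n\in\Lambda_j$. Because the underlying orbit $\{f^n(y)\}$ has zero slop, the estimate for $d(f(x_n),x_{n+1})$ via the triangle inequality through $f(y_n)$, $f^{n+1}(y)$, and $y_{n+1}$ picks up only four small terms: two $f$-distortions of $\gamma$-close pairs (each $<\delta/4$) and two projection distances (each $<\gamma<\delta/4$), giving $<\delta$ directly. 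Simultaneously $d(f^n(y),x_n)<2\gamma<\varepsilon$, so $y$ itself $\varepsilon$-shadows $\{x_n\}\subset\Lambda_j$ and hence lies in $\Lambda_{j+1}$, a contradiction. The point is that carrying an ambient $\delta$-pseudo orbit through the projection, as you do, forfeits one full $\delta$ that you can never recover; working with the true orbit of $y$ is what makes the bookkeeping close.
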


\begin{proof}[Proof of Claim] Suppose that $\Lambda_j\neq \Lambda_{j+1}$ and $\Lambda_{j+1}\neq \Lambda_{j+2}$ and $\nu_j, \nu_{j+1}\in (0, \gamma)$. Fix $y\in \Lambda_{j+2}-\Lambda_{j+1}$. We will construct a $\delta$-pseudo orbit in $\Lambda_j$ that $y$ $\varepsilon$-shadows. This will show that $y\in \Lambda_{j+1}$, a contradiction.

 Then there exists some $y_0\in \Lambda_{j+1}$ and $x_0\in \Lambda_j$ such that $d(y, y_0)<\nu_{j+1}$ and $d(y_0, x_0)<\nu_{j}<\gamma$. Then
 $$d(f(y), f(y_0))<\delta/4\textrm{ and }d(f(y_0), f(x_0))<\delta/4.$$ Also, we know there exists points $y_1\in \Lambda_{j+1}$ and $x_1\in \Lambda_j$ such that $d(f(y), y_1)<\nu_{j+2}$ and $d(y_1, x_1)<\nu_{j+1}$. Hence,
 $$
 \begin{array}{llll}
 d(f(x_0), x_1) & \leq d(f(x_0), f(y_0)) + d(f(y_0), f(y)) + d(f(y), y_1) + d(y_1, x_1)\\
 &\leq \delta/4 +\delta/4 +\nu_{j+2} + \nu_{j+1}<\delta
 \end{array}
 $$
 and
 $$d(f(y), x_1)\leq d(f(y), y_1) + d(y_1, x_1)<\nu_{j+2} + \nu_{j+1}<\delta/2<\varepsilon.$$
 Continue inductively, we can construct a forward $\delta$-pseudo orbit $(x_k)_{k=0}^{\infty}$ such that $y$ $\varepsilon$-shadows the pseudo orbit. Also, since the estimates on $\gamma$ apply for $f^{-1}$ we can construct a bi-infinite $\delta$-pseudo orbit $(x_k)$ in $\Lambda_j$ such that $y$ $\varepsilon$ shadows $(x_k)$. Then $y\in\Lambda_{j+1}$, a contradiction.
\end{proof}

We now return to the proof of the theorem.

Repeating the above arguments for $\Lambda_j$ we see that $\Lambda_j\subset \tilde{\Lambda}$ for all $j\in\mathbb{N}$. We know from Proposition~\ref{p.stabilize} that if $\Lambda_{j+1}=\Lambda_j$ for some $j\in\mathbb{N}$ that $\Lambda_j$ is locally maximal. To conclude the proof of the theorem we simply need to show that the sequence $\Lambda_j$ stabilizes.

Suppose that the sequence $\Lambda_j$ does not stabilizer. We know from the above claim that for each $j$ the sets $\Lambda_{j+1}$ or $\Lambda_{j+2}$ will be a distance of $\gamma$ from the set $\Lambda_j$ using the Hausdorff metric. Since each manifold is finite dimensional we know that given the dimension of the manifold if the sequence does not stabilize there exists some $N\in\mathbb{N}$ where the Hausdorff distance from $\Lambda_N$ to $\Lambda$ is greater than $\eta$. This is a contradiction since each $\Lambda_j\subset \tilde{\Lambda}$ and $\tilde{\Lambda}\subset V_{\eta}(\Lambda)$.
\end{proof}

\end{document}